\DeclareMathOperator{\Moduli}{\mathcal{M}}       
\DeclareMathOperator{\Teich}{\mathcal{T}}        
\DeclareMathOperator{\Stratum}{\Omega \mathcal{M}}      
\DeclareMathOperator{\XFam}{\mathcal{X}}      
\DeclareMathOperator{\YFam}{\mathcal{Y}}      
\DeclareMathOperator{\HFam}{\mathcal{H}}      
\DeclareMathOperator{\HH}{\mathbb{H}}            
\DeclareMathOperator{\PP}{\mathbb{P}}            
\newcommand{\dual}{\vee}		         
\DeclareMathOperator{\OX}{\mathcal{O}}           
\DeclareMathOperator{\KBundle}{\mathcal{K}}       
\DeclareMathOperator{\LBundle}{\mathcal{L}}       
\DeclareMathOperator{\VBundle}{\mathcal{V}}       
\DeclareMathOperator{\K}{\mathbb{K}}		  
\DeclareMathOperator{\CC}{\mathbb{C}}		  
\DeclareMathOperator{\RR}{\mathbb{R}}		  
\DeclareMathOperator{\ZZ}{\mathbb{Z}}		  
\DeclareMathOperator{\QQ}{\mathbb{Q}}		  
\DeclareMathOperator{\Sym}{\mathrm{Sym}}         
\DeclareMathOperator{\End}{\mathrm{End}}         
\DeclareMathOperator{\PGL}{\mathrm{PGL}}         
\DeclareMathOperator{\Jac}{\mathrm{Jac}}         
\DeclareMathOperator{\Pic}{\mathrm{Pic}}         
\DeclareMathOperator{\diag}{\mathrm{diag}}         
\newcommand{\dd}{\mathrm{d}}			 	
\newcommand{\pder}[2][]{\frac{\partial #1}{\partial #2}}  
\renewcommand{\div}{\mathrm{div}}            
\DeclareMathOperator{\isom}{\cong}               
\DeclareMathOperator{\tensor}{\otimes}           
\DeclareMathOperator{\ord}{\mathrm{ord}}         
\DeclareMathOperator{\Res}{\mathrm{Res}}         
\DeclareMathOperator{\Tor}{\mathrm{Tor}}
\DeclareMathOperator{\Prim}{\mathrm{P}}
\newcommand{\ol}[1]{\overline{#1}}	  	 
\newcommand{\eg}{e.\,g.\ }		
\newcommand{\ie}{i.\,e.\ }		
\DeclareMathOperator{\Hcoh}{\mathrm{H}}   
\theoremstyle{plain}
\newtheorem{prop}{Proposition}[section]
\newtheorem{thm}[prop]{Theorem}
\newtheorem{lem}[prop]{Lemma}
\newtheorem{cor}[prop]{Corollary}
\theoremstyle{definition}
\theoremstyle{remark}
\newtheorem{rem}[prop]{Remark}
\numberwithin{equation}{section}
\title[Equation of the $(2,3,4)$-Teichm\"uller curve]{The Equation of the Kenyon-Smillie $(2,3,4)$-Teichm\"uller curve}
\author[M. Costantini, A. Kappes]{Matteo Costantini and Andr\'e Kappes}
\address{Institut f\"ur Mathematik, Robert-Mayer-Str. 6--8,
 Goethe-Universit\"at Frankfurt/Main}
\email{costanti@math.uni-frankfurt.de}
\email{andre.kappes@gmx.net}
\date{\today}
\thanks{}
\begin{document}

\begin{abstract}
We compute the algebraic equation of the universal family over 
the Kenyon-Smillie $(2,3,4)$-Teichm\"uller curve and we prove that the equation is correct in two different ways.

Firstly, we prove it in a constructive way via linear conditions imposed by three special points of the Teichmüller curve. Secondly, we verify that the equation is correct by computing its associated Picard-Fuchs equation.  

We also notice that each point of the Teichmüller curve has a hyperflex and we see that the torsion map is a central projection from this point.
\end{abstract}

\maketitle

\section{Introduction}

Almost all known primitive Teichm\"uller curves fall in very few series. Currently, an infinite series in genus $2$ is known by independent work of Calta and McMullen (\cite{mcmullenbild}, \cite{calta}), which generalizes to the construction of the infinite Prym families (\cite{mcmullenprym}). In addition, there is the infinite series of Bouw-M\"oller curves (\cite{bouwmoel}) generalizing \cite{veech89} and \cite{ward}. Moreover, recently McMullen, Mukamel and Wright \cite{mcmullenmukamelwright16} discovered a new series of Teichm\"uller curves in genus $4$.

There are only two known primitive Teichm\"uller curves that do not belong to any of these families. One of them parametrizes all affine deformations $\{(S_t,\omega_t)\}_t$ of the translation surface $(S,\omega)\in \Stratum_3(3,1)$ that is obtained from unfolding a Euclidean triangle  with angles $(\tfrac{2\pi}{9}, \tfrac{3\pi}{9}, \tfrac{4\pi}{9})$.  It was discovered by Kenyon and Smillie \cite{kenyon-smillie}, who proved that $(S,\omega)$ is a lattice surface with  Veech group equal to the triangle group $\Delta(9,\infty,\infty)$. 
The translation surface $(S,\omega)$ is the order $9$ orbifold point of its associated Teichmüller curve, which is uniformized by $\HH/\Delta(9,\infty,\infty)$.

\subsection*{The equation.}
In this paper, we discuss the Kenyon-Smillie $(2,3,4)$-Teichmüller curve from an algebro-geometric perspective.
First, we derive the equations of the algebraic curves $\{S_t\}_{t}$ parametrized by this Teichm\"uller curve.

\begin{thm}
\label{thm:mainthm}
The universal family over the complement of the orbifold point of the  Kenyon-Smillie $(2,3,4)$-Teichm\"uller curve is given by the family of plane quartics  satisfying the equation  
\begin{equation}
\label{eq:maineq}
\begin{split}
X^4+ t (X^4- 3X^3Y+ 6X^3Z -3X^2Y^2- 6X^2 Y Z +  6 X^2 Z^2   +4 X Y^3  \\
 \quad -6 X Y^2 Z - 6 X Y Z^2+ X Z^3+ 3 Y^4 +3Y^3 Z)=0
\end{split}
\end{equation}
where $t$ varies in $\PP^1-\{0,1,\infty\}$.

The triple zero  of the differential $\omega_t$ is the point $P_t=(0:0:1)\in S_t$ and the simple zero  is the point $Q_t=(0:1:-1)\in S_t$. 
\end{thm}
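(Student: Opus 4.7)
The plan is to use the canonical embedding of $S_t$ as a plane quartic, together with data at the three special points of the Teichm\"uller curve, to determine the equation by a linear algebra argument. Since $(S_t,\omega_t)\in \Stratum_3(3,1)$, the curve $S_t$ has genus $3$. Assuming non-hyperellipticity (which can be verified a posteriori from the smoothness of the resulting quartic, or seen directly from the unfolding construction), the canonical map embeds $S_t$ into $\PP^2$ as a smooth plane quartic and $\omega_t$ becomes the restriction of a linear form. The divisor $(\omega_t)=3P_t+Q_t$ is therefore cut out by a line $\ell_t$, which is tangent to $S_t$ at $P_t$ with contact order $3$ and meets $S_t$ at one further point $Q_t$.

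Using $\PGL_3$, I would normalize so that $P_t=(0:0:1)$, $Q_t=(0:1:-1)$, and $\ell_t=\{X=0\}$. The contact conditions then force $F_t|_{X=0}$ to be proportional to $Y^3(Y+Z)$, so the coefficients of $Z^4$, $YZ^3$ and $Y^2Z^2$ in $F_t(X,Y,Z)$ vanish, and the coefficients of $Y^4$ and $Y^3Z$ coincide (both equal to $3t$ in \eqref{eq:maineq}). The residual $\PGL_3$-stabilizer of the triple $(P_t,Q_t,\ell_t)$ is positive-dimensional and can be used to eliminate further parameters from the remaining coefficients, leaving a small ambient family of admissible quartics.

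The Teichm\"uller curve is uniformized by $\HH/\Delta(9,\infty,\infty)$, hence a copy of $\PP^1$ with three distinguished points: the order-$9$ orbifold and the two cusps, which I would identify with the three values in $\{0,1,\infty\}$ of the $t$-parameter. One then writes the family as a pencil $F_t=F_0+tF_1$ of quartics in the normalized coordinates. Each special point supplies a concrete quartic model: the orbifold quartic is the $\ZZ/9$-invariant canonical model of Kenyon-Smillie's unfolded $(2,3,4)$-triangle surface, which is computed from its $\ZZ/9$-eigenspace decomposition, while each cusp corresponds to a stable degeneration whose combinatorics is dictated by the cylinder decomposition of $(S,\omega)$ at the corresponding parabolic fixed point of the Veech group. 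Once these three models are transported into the normalized coordinates, they impose linear conditions on the coefficients of $F_0$ and $F_1$ which determine the pencil up to a global scalar, producing \eqref{eq:maineq}; the identification of $P_t$ and $Q_t$ with the zeros of $\omega_t$ is then built into the normalization.

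The main obstacle is the explicit identification of the limiting stable curves at the two cusps in the chosen coordinate system: one must describe the cylinder decompositions corresponding to each parabolic cusp, construct the associated limit stable curves together with the limits of their differentials, and transport them into plane-quartic coordinates via $\PGL_3$ in a way compatible with the data $(P_t,Q_t,\ell_t)$. Once these three model quartics are known, the remaining determination of the pencil reduces to routine linear algebra.
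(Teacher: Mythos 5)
There is a genuine gap at the heart of your plan: you write the family as a pencil $F_t = F_0 + tF_1$, but nothing in your argument justifies that the coefficients of $F_t$ are affine--linear in $t$. A priori they are arbitrary rational functions of $t$ (or, after passing to the $9$-fold cover $t=s^9$ needed to unfold the orbifold point, arbitrary sections of some line bundles on $\PP^1$), and knowing the fibers over three points of the base then determines nothing about the fibers elsewhere. Establishing the a priori shape of the coefficients is precisely the main technical content of the paper's proof: one uses the Catanese--Pignatelli structure theorem for the relative canonical ring of a family of non-hyperelliptic genus-$3$ curves to identify the kernel of $\Sym^4 V_1 \to V_4$ with $\det V_1$, computes the degrees $4,2,1$ of the real-multiplication eigenspace bundles $\LBundle_i$ via the Harder--Narasimhan filtration, and deduces that $a_{i,j,k}$ is homogeneous of degree $4i+2j+k-7$ in $s$; the order-$9$ orbifold symmetry then forces each $a_{i,j,k}$ to be a single monomial (except $a_{4,0,0}$, which has two terms). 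Only after the rescaling $(X:Y:Z)\mapsto(X:s^2Y:s^3Z)$ does the family descend to something linear in $t$. Without this input, your "routine linear algebra" step has no finite-dimensional space of unknowns to work in.

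A second, related problem is your fiberwise $\PGL_3$-normalization. Fixing $(P_t,Q_t,\ell_t)$ leaves a positive-dimensional stabilizer, and you do not explain how to choose the remaining normalization consistently and algebraically over the base; an inconsistent choice would destroy any polynomial dependence on $t$. The paper avoids this by using the canonical coordinate system given by the eigendifferentials $\omega^{(i)}$ for real multiplication, chosen as global sections of the $\LBundle_i$ with divisors supported at $\infty$ --- this is what makes the degree bookkeeping, the transport of the cusp and orbifold models into common coordinates, and the final linear determination all cohere. Your identification of the contact conditions at $P_t$ and $Q_t$ (forcing $F_t|_{X=0}\propto Y^3(Y+Z)$) and your use of the three special fibers are correct in spirit and match the paper's strategy, but they only become a proof once the degree bounds on the coefficients are in place.
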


Since this Teichmüller curve has one orbifold point, which is $t=0$ in our presentation, the family we give is universal only over the complement of this point and there is no universal family over the whole Teichmüller curve. See Equation~\eqref{eq:spar} for the universal family over a cover of the full Teichmüller curve.

We prove Theorem \ref{thm:mainthm} in two different ways. Firstly we  prove it constructing the equation using considerations on the conditions imposed by the three special points of the Teichmüller curve, the orbifold point $S_0$ and the two cusps $S_1$ and $S_{\infty}$. Secondly, we  show again that Equation  \eqref{eq:maineq} defines a Teichmüller curve by constructing the Picard-Fuchs differential equation associated with the variation of Hodge structures over the Teichmüller curve.

\subsection*{The Torsion map.}

By \cite{moellerper}, the difference between two zeros of $\omega_t$ is a torsion point of the Jacobian of $S_t$. Thus, there is a minimal $n$ such that for all $t$, the divisor $n(P_t - Q_t)$ is the divisor of a meromorphic function, which we call the \textit{torsion map} $\Tor$. Using Theorem~\ref{thm:mainthm}, we can explicitly determine this map for the Kenyon-Smillie Teichm\"uller curve and we notice that $Q_t$ is a hyperflex for every $t$. 

\begin{prop}
	\label{prop:torsion}
	The torsion map is given by the projection from $Q_t$, which is the map induced by the linear system $|K_{S_t}(-Q_t)|$. 
	The point $Q_t$ is a hyperflex for every $t$, which means that the tangent to $S_t$ in $Q_t$ is of order $4$. 
\end{prop}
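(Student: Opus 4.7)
\emph{Hyperflex at $Q_t$.} The plan is to start by computing the tangent line to $S_t$ at $Q_t = (0:1:-1)$. Evaluating the partial derivatives of the defining polynomial $F(X,Y,Z)$ from Equation~\eqref{eq:maineq} at $Q_t$ should give $\partial_X F = \partial_Y F = \partial_Z F = 3t$, so that the projective tangent at $Q_t$ is the line $X + Y + Z = 0$ (for $t \notin \{0,1,\infty\}$). The crucial step is then to substitute $Z = -X - Y$ into $F$: the claim is that the entire expression inside the $t$-bracket of Equation~\eqref{eq:maineq} cancels monomial by monomial, leaving $F(X,Y,-X-Y) = X^4$. Granted this, the tangent line meets $S_t$ only at $Q_t$, with multiplicity $4$, so $Q_t$ is a hyperflex and $4Q_t \sim K_{S_t}$. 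This monomial cancellation is the main technical obstacle, since it requires careful coefficient bookkeeping, and it is here that the hyperflex property is genuinely encoded in the specific form of Equation~\eqref{eq:maineq}.

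\emph{Torsion order.} Next, since $\omega_t$ is a holomorphic differential with $\div(\omega_t) = 3P_t + Q_t$, that divisor is canonical; combining with the hyperflex relation would yield $3P_t + Q_t \sim K_{S_t} \sim 4Q_t$, i.e.\ $3(P_t - Q_t) \sim 0$. Thus the minimal torsion order $n$ must divide $3$. I would rule out $n \leq 2$ as follows: $n = 1$ would force $P_t = Q_t$, while $n = 2$ would give $h^0(2Q_t) \geq 2$, producing a $g^1_2$ and making $S_t$ hyperelliptic---contrary to the fact that smooth plane quartics are canonically embedded and hence non-hyperelliptic. So $n = 3$.

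\emph{Identifying the torsion map.} By Riemann--Roch, $|K_{S_t}(-Q_t)|$ has projective dimension $1$, and its associated morphism is by general principles the projection $\pi_{Q_t}$ from $Q_t$, a degree-$3$ map $S_t \to \PP^1$. To recognize it as the torsion map, I would compute two of its fibers explicitly: the hyperflex computation gives $\pi_{Q_t}^{-1}(\text{tangent direction}) = 4Q_t - Q_t = 3Q_t$; and since the line $X=0$ through $P_t$ and $Q_t$ cuts out the divisor $3P_t + Q_t$ on $S_t$ (because $F(0,Y,Z) = 3tY^3(Y+Z)$), the fiber over the corresponding direction is $3P_t$. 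A degree-$3$ morphism $S_t \to \PP^1$ with totally ramified fibers $3P_t$ and $3Q_t$ is, up to an automorphism of the target, exactly the torsion map.
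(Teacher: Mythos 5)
Your proposal is correct and follows essentially the same route as the paper's primary proof: verify from Equation~\eqref{eq:maineq} that $Q_t$ is a hyperflex with tangent $X+Y+Z=0$ (the cancellation $F(X,Y,-X-Y)=X^4$ does hold, consistently with the factor $(X+Y+Z)$ of $F_\infty$ in Corollary~\ref{cor:redcusp-equation}) and that the line $X=0$ cuts out $3P_t+Q_t$, so the projection from $Q_t$ is the torsion map. Your only additions are cosmetic: you derive $n=3$ from $4Q_t\sim K_{S_t}\sim 3P_t+Q_t$ rather than quoting it, and the exclusion of $n=2$ is superfluous since $n$ must divide $3$.
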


\medskip
\begin{minipage}{\textwidth}
	\centering
	\includegraphics[scale=0.55]{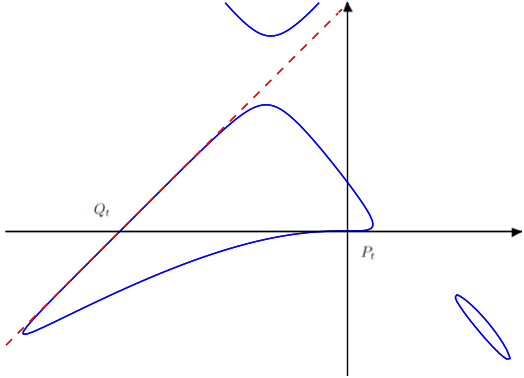}\par
	\captionof{figure}{Real points of the curve $S_t$ for $t=3$ near  $P_t=(0,0)$ and the hyperflex $Q_t=(0,-1)$.}
\end{minipage}
\medskip

In the above coordinates, the torsion map is the degree $3$ map totally ramified over $P_t$ and $Q_t$ given by
\[\Tor:S_t\to \PP^1,\quad \begin{cases}
(X:Y:Z)\mapsto (X:Y+Z)\quad \text{ for }\ (X:Y:Z)\not = (0:1:-1)\\
Q_t=(0:1:-1)\mapsto (-1:1) 
\end{cases}\]

We will give also a proof of the above Proposition in which we do not use the explicit equation of the universal family. Indeed, it is enough to know that the Teichmüller curve parametrizes quartics in $\PP^2$ together with a differential in the stratum $(3,1)$.

Notice that, since every $S_t$ has a hyperflex in $Q_t$, the Teichm\"uller curve is also in the image of the projection of the stratum $\Stratum_3(4)^{\text{odd}}$, which is a divisor in $\mathcal{M}_3$.

The initial motivation of the paper was to investigate a question of Alex Wright about the relation between real multiplication and the torsion map. One can  see such a relation in the Veech-Ward-Bouw-M\"oller Teichm\"uller curves, where real multiplication is induced by the correspondence given by the graph of an automorphism coming from the Galois normalization map (see Section~\ref{sec:torsion} for details).
As in the Veech-Ward-Bouw-M\"oller case, the Kenyon-Smillie Teichm\"uller curve is also uniformized by a triangle group. However, we will deduce from Proposition~\ref{prop:torsion} that for the  Kenyon-Smillie-Teichm\"uller curve, real multiplication does not come from the normalization of the torsion map. This gives some evidence in support of the Kenyon-Smillie example being sporadic.

\subsection*{Picard-Fuchs equation.}
Given only  Equation~\eqref{eq:maineq}, we can prove independently that it is indeed the equation of the universal family over a Teich\-m\"uller curve by computing the Picard-Fuchs equation satisfied by the periods of $\omega_t$ and by employing Möller's characterization of Teichmüller curves.

\begin{prop}
\label{prop:P-F}
The periods of $\omega_t$ are solutions of the following differential equation: 
\[\frac{16}{81t(t-1)}y+\frac{17t - 8}{9t(t - 1)}y'+y''=0.\]
\end{prop}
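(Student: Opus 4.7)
The plan is to derive the equation directly from the plane-quartic model of Theorem~\ref{thm:mainthm} by the Griffiths--Dwork reduction applied to the Gauss--Manin connection, and to extract a second-order linear dependence of $\omega_t,\,\partial_t\omega_t,\,\partial_t^{2}\omega_t$ in $H^{1}_{\mathrm{dR}}(S_t)$. A relation of order at most two exists a priori: by M\"oller's theorem, the Teichm\"uller curve carries a rank-two sub-variation of Hodge structure on which $\omega_t$ generates a cyclic module for the connection, so one only needs to make that sub-VHS explicit.

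\textbf{Step 1 (de Rham representatives).} Both $P_t=(0{:}0{:}1)$ and $Q_t=(0{:}1{:}{-}1)$ lie on the line $\{X=0\}$, and $F_t(0,Y,Z)=3t\,Y^{3}(Y+Z)$, so this line cuts $S_t$ in the canonical divisor $3P_t+Q_t$. Hence, up to a scalar,
\[\omega_t \;=\; \Res\!\Bigl(\tfrac{X\,\Omega}{F_t}\Bigr), \qquad \Omega \;=\; X\,dY\wedge dZ - Y\,dX\wedge dZ + Z\,dX\wedge dY.\]
Writing $F_t = X^{4}+tG$ with $G$ the polynomial from \eqref{eq:maineq} and differentiating under the residue yields
\[\partial_t\omega_t \;=\; -\Res\!\Bigl(\tfrac{XG\,\Omega}{F_t^{2}}\Bigr), \qquad \partial_t^{2}\omega_t \;=\; 2\Res\!\Bigl(\tfrac{XG^{2}\,\Omega}{F_t^{3}}\Bigr).\]

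\textbf{Step 2 (Griffiths--Dwork reduction).} By the standard identity, whenever the numerator $A$ lies in the Jacobian ideal $J_t = \gen{\partial_X F_t, \partial_Y F_t, \partial_Z F_t}$, the class $A\,\Omega/F_t^{k+1}$ is cohomologous to one of pole order~$k$ with an explicit lower-pole representative. Iteratively reducing $-XG$ and $2XG^{2}$ modulo $J_t$ produces $\QQ(t)$-linear combinations, in a fixed basis of $H^{1}_{\mathrm{dR}}(S_t)$, representing $\omega_t$, $\partial_t\omega_t$, and $\partial_t^{2}\omega_t$. Imposing $\partial_t^{2}\omega_t + p(t)\,\partial_t\omega_t + q(t)\,\omega_t \equiv 0 \pmod{d(\cdot)}$ and matching basis coefficients gives a linear system in $p(t),q(t)\in\QQ(t)$ whose unique solution is $p(t)=(17t-8)/(9t(t-1))$ and $q(t)=16/(81\,t(t-1))$. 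The singular set $\{0,1,\infty\}$ matches the orbifold point and the two cusps, providing an independent consistency check.

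\textbf{Main obstacle.} The derivation is computationally heavy: reducing $XG^{2}$ (of degree eight) modulo $J_t$ over $\QQ(t)$ requires several rounds of polynomial division with rational-function coefficients and is most cleanly performed in a computer algebra system. The only conceptual input is M\"oller's a priori bound on the order of the equation; everything else is bookkeeping.
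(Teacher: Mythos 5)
Your proposal is correct and follows essentially the same route as the paper: the Griffiths--Dwork reduction applied to the residue representative $X\Omega_0/F$ of $\omega$, producing a second-order relation in cohomology whose coefficients are exactly the stated rational functions. The only differences are minor: the paper runs the computation in the parameter $s$ with $t=s^9$ (so as to have unipotent monodromy at the cusps, needed for the subsequent maximal-Higgs corollary) and then descends the resulting equation to $t$, and it establishes the existence of the order-two relation by the computation itself rather than by invoking M\"oller's splitting a priori --- a point worth keeping in mind, since Section~\ref{sec:picard-fuchs} is meant to be logically independent of the fact that \eqref{eq:maineq} defines a Teichm\"uller curve.
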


Note that this equation is a hypergeometric differential equation, meaning that it has three regular singular points.

The main ingredient to compute the above Picard-Fuchs equation is the well-known Griffiths-Dwork method, which is an algorithm for computing Picard-Fuchs equations of family of projective hypersurfaces.

As an immediate consequence, we obtain the following corollary.


\begin{cor}
	\label{cor:maxhiggs}
	The section $\omega_t$  defines a maximal Higgs, irreducible, rank 2 subbundle of the flat bundle defined by the variation of Hodge structures.
\end{cor}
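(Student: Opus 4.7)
The plan is to translate Proposition~\ref{prop:P-F} into geometric information about the sub-VHS generated by the periods of $\omega_t$ and then invoke the standard dictionary between rank-two Fuchsian ODEs and rank-two weight-one variations of Hodge structure.

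Since the Picard--Fuchs equation is of order two, the sections $\omega_t$ and $\nabla_{\partial_t}\omega_t$ span a rank-two $\mathcal{O}_B$-submodule $\VBundle$ of the relative de Rham bundle, which the equation itself shows is stable under the Gauss--Manin connection. Its flat sections form a rank-two local system $\VLocal$ inside the full VHS; the line $L=\mathcal{O}_B\cdot\omega_t\subset\VBundle$ plays the role of the $(1,0)$-part, so $\VLocal$ acquires the structure of a weight-one sub-VHS. Next I would compute the exponents of the Picard--Fuchs operator at each regular singular point by reading off the indicial equation from the coefficients in Proposition~\ref{prop:P-F}. A short calculation gives $\{0,\tfrac{1}{9}\}$ at $t=0$, $\{0,0\}$ at $t=1$ and $\{\tfrac{4}{9},\tfrac{4}{9}\}$ at $t=\infty$, so the projective monodromy of $\VLocal$ is conjugate to the triangle group $\Delta(9,\infty,\infty)$ uniformizing $B$. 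Irreducibility of $\VLocal$ is then immediate: the order-nine local monodromy has two distinct eigenlines, while a quasi-unipotent local monodromy preserves only one line, so the two cannot share a common invariant line.

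For the maximal Higgs property I would argue via the period map. The identification of the projective monodromy with $\Delta(9,\infty,\infty)$ forces the period map of $\VLocal$ to coincide with the orbifold uniformization $B\to\HH/\Delta(9,\infty,\infty)$; its derivative is an isomorphism and equals, via Griffiths transversality, the Kodaira--Spencer map $\theta_L\colon L\to L^{-1}\otimes\Omega^1_B(\log)$ of the sub-VHS. Hence $\theta_L$ is an isomorphism, which is the definition of $L$ being maximal Higgs.

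The main obstacle I anticipate is in this last step: one must keep careful track of the orbifold structure on the base so that the local exponent $\tfrac{1}{9}$ at $t=0$ matches the expected contribution of the order-nine cone point, equivalently that the Arakelov degree identity $\deg L = \tfrac{1}{2}\deg\Omega^1_B(\log)$ is saturated on the orbifold $B$. Once this bookkeeping is in place, the conclusion is essentially built into the identification of the monodromy with the uniformizing triangle group, and one can invoke Möller's characterization of Teichmüller curves to repackage the information as the maximal Higgs statement of the corollary.
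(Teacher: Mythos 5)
Your route is genuinely different from the paper's. The paper never works with the hypergeometric operator on the orbifold base: it passes to the $9$-sheeted cover $s\mapsto t=s^9$, where the monodromy at all ten cusps is unipotent, computes the Riemann scheme of the pulled-back equation~\eqref{eq:diffeq} (exponents $\{0,0\}$ at each $\zeta_9^i$ and $\{4,4\}$ at $\infty$), and invokes \cite[Proposition 3.2]{bouwmoel}, which in the unipotent case identifies the vanishing order of the Kodaira--Spencer map at a cusp with the local exponent data; equal exponents give vanishing order zero, hence maximal Higgs, hence irreducibility. Your Schwarz-theoretic argument on the orbifold base is a legitimate classical alternative, and your exponent computations $\{0,\tfrac19\}$, $\{0,0\}$, $\{\tfrac49,\tfrac49\}$ are correct (the operator is the hypergeometric equation with $a=b=\tfrac49$, $c=\tfrac89$). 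What the paper's route buys is that it never has to analyze the global monodromy group at all; what yours buys is independence from the Bouw--M\"oller lemma and a direct identification of the period map with the uniformization.

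However, your irreducibility argument has a genuine gap, and the rest of the proof leans on it. The claim ``a quasi-unipotent local monodromy preserves only one line, so the two cannot share a common invariant line'' is a non sequitur: the unique invariant line of a parabolic element can perfectly well coincide with one of the two eigenlines of the elliptic element (any group of upper-triangular matrices realizes this). Moreover, before you can assert that the projective monodromy is the full triangle group $\Delta(9,\infty,\infty)$, you must know that the local monodromies at $t=1$ and $t=\infty$ are nontrivial parabolics rather than the identity --- exponents $\{0,0\}$ alone do not rule out an apparent singularity, and the standard Schwarz identification presupposes irreducibility. Both points are fixable: irreducibility follows from the classical criterion that a hypergeometric equation is reducible iff one of $a$, $b$, $c-a$, $c-b$ is an integer (here all four equal $\tfrac49$), or equivalently by noting that a common invariant line would force a product of local eigenvalues $e^{2\pi i\,(\text{exponent})}$ over the three singular points to equal $1$, which the exponents $\tfrac19,0,0$ and $0,0,\tfrac49$ do not permit; nontriviality of the parabolics then follows from irreducibility (or from Picard--Lefschetz, since the fibers over $t=1,\infty$ are stable curves with nodes). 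With irreducibility secured first, your identification of the Schwarz map with the orbifold uniformization and of its derivative with the Kodaira--Spencer map does yield the maximal Higgs property, provided you carry out the orbifold bookkeeping at $t=0$ that you already flag. Finally, note that M\"oller's characterization is the \emph{consequence} of maximal Higgs, not an ingredient in proving it, so your closing sentence has the logical direction reversed.
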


By the characterization of Teichm\"uller curves of \cite[Theorem~5.3]{moeller06}, Corollary \ref{cor:maxhiggs} shows again that Equation~\eqref{eq:maineq} defines a Teichmüller curve.

\subsection*{Structure of the paper.}
In Section~\ref{sec:prelims}, we set up the notation and we gather known facts about the Kenynon-Smillie Teichmüller curve and the splitting of $H^0(S_t,\Omega^1)$ into one-dimensional eigenspaces given by real multiplication on the Jacobian of $S_t$.  In Section~\ref{sec:specialpoints}, the three special points, the two cusps and the orbifold point, of the Kenyon-Smillie-curve are discussed. Section~\ref{sec:equation} contains the proof of Theorem~\ref{thm:mainthm} and Proposition~\ref{prop:torsion}. The starting point  is that every non-hyperelliptic curve in genus $3$ is a canonically embedded quartic in $\PP^2$.  The main ingredient in the proof of Theorem~\ref{thm:mainthm}  is the use of a structural result for the relative canonical ring of a family of quartics of \cite{CatanesePignatelli2006}. This, together with the computation of the degree of the eigenspace bundles, enables us to obtain an a priori bound on the 
degree of the coefficients of the family. It then suffices to consider the three special points on the Teichm\"uller curve to compute these coefficients. In Section~\ref{sec:picard-fuchs}, we derive the Picard-Fuchs equation proving Proposition~\ref{prop:P-F} and we show again that Equation~\eqref{eq:maineq} defines a Teichmüller curve.

\subsection*{Notes and references.}
The strategy that we use to determine the equation of the universal family has already been successfully implemented before by 
Bouw and M\"oller \cite{BouwM10}, who have worked out the equations of two of the Weierstra\ss\ curves in genus two.
The main challenge is to find a suitable structural result about the canonical ring of quartic hypersurfaces analogous to the one used by Bouw and Möller for genus two curves.

For an algebraic description of the other Weierstra\ss\ curves in genus two with fundamental discriminant $D<100$, see \cite{mukamel}. The methods that they use can be applied to other Weierstra\ss\ curves in genus two and possibly also to the Prym curves in genus three and four.

The second Teichm\"uller curve that is not known to belong to an infinite series has been found by Vorobets \cite{voroveechalt} and is generated by the surface in $\Stratum_4(6)$ obtained from unfolding a billiard in a triangle with angles $(\tfrac{\pi}{5}, \tfrac{\pi}{3},\tfrac{7\pi}{15})$. To the best of our knowledge, no algebro-geometric construction for its Teichm\"uller curve is known so far.

It is an open question, whether these two Teichm\"uller curves are truly sporadic. 
One argument in support of this hypothesis is given in \cite[Section 7]{leininger}, 
from which one can deduce that these Teichm\"uller curves do not fit into a family, 
where the Veech group is generated by two Dehn multi-twists. 
Since not all parabolics are multi-twists, this is however not enough 
to prove sporadicity among Veech groups generated by two parabolics. 
It is also worth noting that both the Kenyon-Smillie Teichm\"uller curve 
and the Vorobets Teichm\"uller curve correspond to two of the three exceptional 
billiard triangles that are naturally attached to the sporadic Coxeter groups 
$E_6$, $E_7$ and $E_8$.

Notice that, curiously, the Kenyon-Smillie Teichmüller curve is the only known primitive Teichm\"uller curve where the zeros of the differential have different orders and the only known primitive Teichm\"uller curve that does not possess an involution negating $\omega$.

It would be interesting to find the correspondence that gives real multiplication for the Kenyon-Smillie Teichm\"uller curve. A related open question is whether real multiplication for this curve is of Hecke type as in the Veech-Ward-Bouw-M\"oller case (cf. \cite{wright13}).

\subsection*{Acknowledgements.}
We thank Martin M\"oller for suggesting this project to us and for many fruitful and valuable discussions, 
and Alex Wright for his helpful comments on an early draft of this paper and in particular for pointing out to us how peculiar the Kenyon-Smillie-Teichm\"uller curve really is. We would also like to thank an anonymous referee for the helpful comments about the presentation.
Many computations were carried out with the help of \cite{PARI}, and we thank the developers for their work.

\section{Preliminaries}\label{sec:prelims}

In this section, we gather building blocks for the proof of the main theorem. We assume that the reader is familiar with the basic notions on Teichm\"uller curves. For background reading one may consult for example  \cite{moelPCMI}.

\subsection{Orbifold uniformization and universal family}
Let $C$ denote the Kenyon-Smillie-Teichm\"uller curve associated with the flat surface $(S,\omega)$ shown in Figure~\ref{fig:KS-unfolding}.

\begin{figure}[ht]
 \begin{center}
  \begin{tikzpicture}[scale=1.4, 
	ar/.style ={decoration={             
            markings, 
            mark=at position 0.5 with {\arrow{triangle 45}}
        }, postaction={decorate}}] 
\clip (-0.5,-0.2) rectangle (2.5,5.5);

\coordinate (p0) at (0,0);
\coordinate (p1) at (20:1);
\coordinate (p2) at ($(p1)+(40:1)$);
\coordinate (p3) at ($(p2)+(60:1)$);
\coordinate (p4) at ($(p3)+(80:1)$);
\coordinate (p5) at ($(p4)+(100:1)$);
\coordinate (p6) at ($(p5)+(120:1)$);
\coordinate (p7) at (100:1);
\coordinate (p8) at ($(p7)+(80:1)$);
\coordinate (p9) at ($(p8)+(340:1)$);
\coordinate (p10) at ($(p9)+(120:1)$);
\coordinate (p11) at ($(p10)+(20:1)$);
\coordinate (p12) at ($(p11)+(160:1)$);
\coordinate (p13) at ($(p12)+(60:1)$);
\coordinate (p14) at ($(p13)+(40:1)$);
\coordinate (cdelta) at ($(p4)+(160:0.7)$);
\draw (p1)--(p8);
\draw (p7)--(p2);
\draw (p0)--(p9);
\draw (p2)--(p9);

\draw (p3)--(p10);
\draw (p4)--(p11);
\draw (p2)--(p11);
\draw (p4)--(p9);

\draw (p4)--(p13);
\draw (p5)--(p12);
\draw (p6)--(p11);

\draw[ar, name path=side51] (p1)-- node[below] {$5$} (p0);
\draw[ar, name path=side11] (p1)-- node[below] {$1$} (p2);
\draw[ar, name path=side61] (p3)-- node[below,right] {$6$} (p2);
\draw[ar, name path=side21] (p3)-- node[right] {$2$} (p4);
\draw[ar, name path=side71] (p5)-- node[right] {$7$} (p4);
\draw[ar, name path=side31] (p5)-- node[right] {$3$} (p6);
\draw[ar, name path=side72] (p7)-- node[left] {$7$} (p0);
\draw[ar, name path=side22] (p7)-- node[left] {$2$} (p8);
\draw[ar, name path=side41] (p9)-- node[above] {$4$} (p8);
\draw[ar, name path=side32] (p9)-- node[left] {$3$} (p10);
\draw[ar, name path=side52] (p11)-- node[above] {$5$} (p10);
\draw[ar, name path=side42] (p11)-- node[left] {$4$} (p12);
\draw[ar, name path=side62] (p13)-- node[left] {$6$} (p12);
\draw[ar, name path=side12] (p13)-- node[left, above] {$1$} (p14);

\draw[fill=black] (p0) circle (1pt);
\draw[fill=black] (p2) circle (1pt);
\draw[fill=black] (p4) circle (1pt);
\draw[fill=black] (p6) circle (1pt);
\draw[fill=black] (p8) circle (1pt);
\draw[fill=black] (p10) circle (1pt);
\draw[fill=black] (p12) circle (1pt);

\draw[fill=white] (p1) circle (1pt);
\draw[fill=white] (p3) circle (1pt);
\draw[fill=white] (p5) circle (1pt);
\draw[fill=white] (p7) circle (1pt);
\draw[fill=white] (p9) circle (1pt);
\draw[fill=white] (p11) circle (1pt);
\draw[fill=white] (p13) circle (1pt);

\node at (cdelta) {$\Delta$};

  \end{tikzpicture}
 \end{center}
\caption{The Kenyon-Smillie $(2,3,4)$-lattice surface resulting from unfolding the triangle $\Delta$. Sides are labeled by powers of $\zeta_9 = \exp(2\pi i/9)$ and sides with the same label are identified. The triple (simple) zero is marked by a white (black) dot.
\label{fig:KS-unfolding}
}
\end{figure}
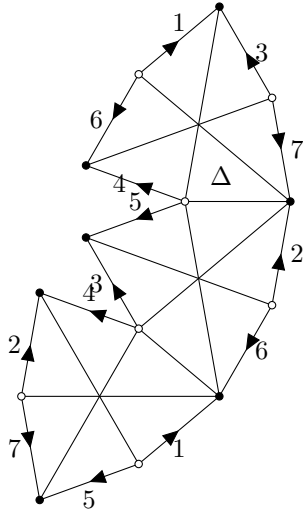

In \cite{kenyon-smillie}, Kenyon and Smillie prove that the Veech group of $(S,\omega)$ is the triangle group $\Delta(9,\infty,\infty)$. 
Thus, $C$ is uniformized by $\HH/\Delta(9,\infty,\infty)$ and the completion   of $C$ is isomorphic to $\PP^1$.
Let $t\in\PP^1$ be a parameter such that $t=0$ is the orbifold point of order $9$,  $t=1$ is the cusp correspondig to the limit deformation of the horizontal cylinder decomposition of $(S,\omega)$ and $t=\infty$ the cusp corresponding to the vertical one. 
Let $S_t$ denote the (stable) Riemann surface parametrized by $t$ and let $\omega_t$ denote the (stable) holomorphic $1$-form on $S_t$ that is obtained by affinely deforming $\omega$. 
By our choice of $t$, we have $(S,\omega) = (S_0,\omega_0)$. Let further $P_t$, $Q_t \in S_t$ be such that
\[\div(\omega_t) = 3P_t + Q_t.\]
We will abbreviate $P=P_0$ and $Q= Q_0$.

To avoid working with  orbifold line bundles, we pass to a finite \'etale covering $\tilde C$ of $C$.
In order to unfold the orbifold structure of $C$, we perform the 9-sheeted covering 
\[\tilde C \to \PP^1,\quad s\mapsto t = s^9\]
  which is totally ramified over the orbifold point and the cusp at $\infty$. The corresponding subgroup $\Gamma'$ of the Veech group is free and the quotient by $\Gamma'$ of the universal family over the Teichm\"uller disk gives the family of curves $\tilde{\phi}:\tilde{\XFam}\to \tilde{C}$. 
  By the Riemann-Hurwitz theorem the genus of $\tilde{C}$ is still $0$ and by construction it has $10$ cusps, which are given by the points $\{\zeta_9^i,\infty\}_{i=1,\dots,9}$ where $\zeta_9:=\exp(2\pi i/9)$.

Let $\ol{C}\isom \PP^1$ denote the completion of $\tilde C$.
The family $\tilde{\XFam}$ is extended to the family $\ol{\phi}:\ol{\XFam}\to \ol{C}$, by adding stable curves at the cusps of $\tilde C$.

\subsection{Absolute cohomology bundle}
The \emph{absolute cohomology bundle}, which will come up in Section~\ref{sec:picard-fuchs}, is the flat vector bundle on $\tilde C$ with fiber $H^1(\tilde\XFam_s,\CC)$ over a point $s\in\tilde C$. Formally, it is the vector bundle associated with the locally constant sheaf $R^1\tilde{\phi}_*\QQ$. It has a canonical extension, due to Deligne, to a vector bundle on all of $\ol{C}$, which we will still refer to as the absolute cohomology bundle. In the context of Teichmüller dynamics, this bundle is often called Hodge bundle, but we will not use this name, since in algebraic geometry, the Hodge bundle is the push forward of the relative canonical bundle.

\subsection{Real multiplication}
By \cite[Theorem 2.7]{moeller06}, the Jacobian of $\ol{\XFam}_s$ has real multiplication by the trace field $\K(S,\omega)$, which in our case is the cubic number field
\[\K(S,\omega) = \QQ(v),\qquad  v = 2\cos\bigl(\tfrac{2\pi}{9}\bigr).\]
In particular, $C$ is an algebraically primitive Teichm\"uller curve, \ie $g(\XFam_s) = \deg(\K/\QQ)$.
Note that $v$ is a root of $P(v) = v^3 - 3v + 1$. The field $\K(S,\omega)$ has three embeddings $\sigma_i$, $i=1,2,3$, into $\RR$. We denote the image of an element $\lambda\in \K(X,\omega)$ under the $i$-th embedding by $\lambda^{(i)}$. We order the three embeddings by requiring that
\[v^{(1)} = v,\qquad v^{(2)} = 2-v-v^2,\qquad v^{(3)} = -2 + v^2.\]

Having real multiplication implies that  the direct image  $\VBundle:=\ol{\phi}_*\omega_{\ol{\XFam}/\ol{C}}$  of the relative dualizing sheaf $\omega_{\ol{\XFam}/\ol{C}}$, is a rank $3$ vector bundle which splits as a direct sum of three line bundles
\[\VBundle = \bigoplus_{i=1}^3 \LBundle_i\]
where $\LBundle_i$ is the eigenspace bundle for real multiplication via the $i$-th embedding.

\subsection{Harder-Narasimhan filtration and degrees of eigenspace bundles}

We order the line bundles $\LBundle_i$ in such a way that $\LBundle_1$ is the one that is generated by $\omega_s$ (and is therefore maximal Higgs by \cite{moeller06}), and such that $\LBundle_1 \oplus \LBundle_2$ is the next step in the Harder-Narasimhan filtration of $\VBundle$ (see \cite[Prop. 4.3]{bainbridgehabeggermoeller}).

This ordering is the leading normalization used in the whole paper. The ordering of the Galois embeddings made above fits together the ordering of the corresponding eigenspace bundles  as one can see from the next remark and by looking at the differentials over the cusps given in Section~\ref{sec:specialpoints}.

\begin{rem} \label{rem:divisors-of-omegai}
	The ordering of the line bundles $\LBundle_i$ is also reflected in the divisors of its sections. If we let $\omega^{(i)}$ denote a local section of $\LBundle_i$ ($i=1,2,3$), then by \cite[Prop. 4.1]{bainbridgehabeggermoeller}
	\begin{align}
	\begin{split}\label{eqn:divisors-of-omegai}
	\div(\omega_s^{(1)}) &= 3P_s + Q_s \in \Pic(\ol\XFam_s)\\
	\div(\omega_s^{(2)}) &\geq P_s \in \Pic(\ol\XFam_s).
	\end{split}
	\end{align}
\end{rem}



Since $\ol{C}\cong \PP^1$, the line bundle $\LBundle_i$ is isomorphic to $\OX_{\PP^1}(k_i)$, where $k_i = \deg(\LBundle_i)$, and these degrees are readily computed.

\begin{lem}
	\label{lem:degrees}
	The degrees of the line bundles $\LBundle_i$ over $\ol{C}$ are given by
	\begin{align*}
	\deg(\LBundle_1) &= 4 & \deg(\LBundle_2) &= 2 & \deg(\LBundle_3) &= 1
	\end{align*}
\end{lem}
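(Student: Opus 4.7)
Plan. The degree $\deg\LBundle_1 = 4$ is immediate from the maximal Higgs property of $\LBundle_1$ noted above. The Kodaira-Spencer map induces an isomorphism $\LBundle_1^{\otimes 2} \cong \Omega^1_{\ol{C}}(\log\Delta)$, where $\Delta$ is the reduced divisor of the $10$ cusps on $\ol{C}\cong\PP^1$; hence $2\deg\LBundle_1 = -2 + 10 = 8$.

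To pin down $\deg\LBundle_2$ and $\deg\LBundle_3$, the plan is to exploit the action of $\mu_9$ on $\tilde{C}$ induced by the order-$9$ elliptic element of the Veech group. Since $\tilde{C}\to C$ is the cover $t=s^9$, the group $\mu_9$ acts on $\tilde{C}\cong\PP^1$ by $s\mapsto \zeta_9 s$, fixing $s=0$ and $s=\infty$, and this action lifts canonically to each eigenspace bundle $\LBundle_i$, making it $\mu_9$-equivariant. For any $\mu_9$-equivariant line bundle $\OX_{\PP^1}(d)$, the $\mu_9$-weights $w_0,w_\infty$ at the two fixed points satisfy $d \equiv w_0 - w_\infty \pmod 9$. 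The weight $w_0^{(i)}$ of $\LBundle_i$ at $s=0$ is the eigenvalue of the orbifold automorphism $\tau$ on $\omega^{(i)}|_P$. Since the three eigenforms are Galois conjugates under real multiplication and $\tau^{\ast}\omega = \zeta_9\omega$, these weights are the three Galois conjugates of $1 \in (\ZZ/9\ZZ)^\times/\{\pm 1\}$, namely $\{1,2,4\}$, in the ordering induced by the Harder-Narasimhan filtration. The weights $w_\infty^{(i)}$ are then read off from the $\tau$-action on the stable limit at the totally-ramified fixed point $s=\infty$, which can be described directly from the flat geometry at the $t=\infty$ cusp of $C$.

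Combining the congruences modulo $9$ with the strict Harder-Narasimhan bounds
\[0 \leq \deg\LBundle_3 < \deg\LBundle_2 < \deg\LBundle_1 = 4\]
(the upper bound because the HN slopes are strictly decreasing on the successive rank-$1$ graded pieces; the lower bound because the Hodge bundle of a polarized VHS is nef, so all its HN quotients have non-negative degree), the degrees are pinned down uniquely to $\deg\LBundle_2 = 2$ and $\deg\LBundle_3 = 1$: indeed, the possibilities for $(\deg\LBundle_2,\deg\LBundle_3)$ are only $(1,0),(2,0),(2,1),(3,0),(3,1),(3,2)$, and of these only $(2,1)$ is compatible with the congruences $w_0^{(i)}-w_\infty^{(i)}\pmod 9$ coming from the Galois orbit $\{1,2,4\}$.

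The main technical step is the identification of the weights $w_\infty^{(i)}$ at the $s=\infty$ fixed point of the $\mu_9$-action, which requires careful bookkeeping of the $\tau$-action on the stable limit; this is naturally handled together with the description of the special points in Section~\ref{sec:specialpoints}. A cleaner alternative is to compute $\deg\VBundle=7$ directly via Mumford's relation $12\deg\VBundle=\kappa+\delta$ from the cylinder decompositions of $(S,\omega)$ (which determine the nodal structure of the ten singular fibers), and then separate $\deg\LBundle_2$ and $\deg\LBundle_3$ using the strict Harder-Narasimhan ordering together with the congruences above.
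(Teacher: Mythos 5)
Your first step, $\deg\LBundle_1=4$ from the maximal Higgs isomorphism $\LBundle_1^{\otimes 2}\cong\Omega^1_{\ol{C}}(\log)$ with $\deg\Omega^1_{\ol{C}}(\log)=-2+10=8$, is exactly the paper's. For the remaining two degrees the paper takes a different and much shorter route: it quotes the Weierstra\ss{} exponents $w_2=\tfrac12$, $w_3=\tfrac14$ for the stratum $\Stratum_3(3,1)$ from Yu--Zuo's Table~1, together with the Bainbridge--Habegger--M\"oller result that the Harder--Narasimhan filtration coincides with the eigenspace filtration, and reads off $\deg\LBundle_2=2$, $\deg\LBundle_3=1$. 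Your proposed replacement via the $\mu_9$-equivariant structure on the $\LBundle_i$ is a genuinely different idea and would have the virtue of being self-contained, but as written it does not prove the statement.

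The gap is that the congruences $\deg\LBundle_i\equiv w_0^{(i)}-w_\infty^{(i)}\pmod 9$ carry no information until the weights $w_\infty^{(i)}$ at the fixed point $s=\infty$ are actually computed, and you never compute them: you assert they can be ``read off from the $\tau$-action on the stable limit'' and then claim that among the six a priori pairs only $(2,1)$ survives. That claim is unverifiable without the numbers, and obtaining them is not routine bookkeeping --- $s=\infty$ is a cusp, the fiber is the reducible stable curve $S_\infty$, and one must identify the induced order-$9$ automorphism of $S_\infty$ and diagonalize its action on $\Hcoh^0(S_\infty,\omega_{S_\infty})$, which is essentially the content of the paper's analysis of the reducible cusp (carried out \emph{after} the degrees are known). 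A further imprecision: the weights at $s=0$ are $(1,5,7)$ (the matrix $A_g$ of Corollary~3.2), which you record only up to sign as $\{1,2,4\}$ in $(\ZZ/9\ZZ)^\times/\{\pm1\}$; the sign matters in the congruence, so even the $s=0$ side is not pinned down. Finally, the fallback via Mumford's relation $12\deg\VBundle=\kappa+\delta$ does not close the gap: $\kappa$ is not determined by the nodal structure of the ten singular fibers, and even granting $\deg\VBundle=7$, both $(3,0)$ and $(2,1)$ satisfy $\deg\LBundle_2+\deg\LBundle_3=3$ with the strict Harder--Narasimhan inequalities, so you would again need the uncomputed congruences to conclude.
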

\begin{proof}
	First of all note that
	\[\deg(\Omega^1_{\ol{C}}(\log(S)) =-\chi(\tilde{C})=-(\chi(\ol{C})-\#\text{cusps})=8.\]
	Since we have a splitting of $\VBundle$ into line bundles, we can consider the numbers
	\[\lambda_i = \frac{2 \deg(\LBundle_i)}{\deg(\Omega^1_{\ol{C}}(\log(S))}=\frac14 \deg(\LBundle_i).\]
	Since $\LBundle_1$ is maximal Higgs, $\displaystyle \lambda_1 = 1$.

	The quotient of the degrees of the steps of the Harder-Narasimhan filtration and the degree of $\Omega^1_{\ol{C}}(\log(S)$ were computed  in  \cite[Table 1]{YZ13} for the $(3,1)$-Stratum 
	under the name $w_i$ of Weierstra\ss\ exponents by analyzing filtrations of the relative dualizing sheaf given by Weierstra\ss\ gap sequences.
	Using the same method, \cite[Prop. 4.3]{bainbridgehabeggermoeller} shows that the Harder-Narasimhan filtration of $\VBundle$ is given by the filtration of eigenspace bundles. 
	By our ordering choice, this filtration is 
	\[\LBundle_1\subset \LBundle_1 \oplus \LBundle_2\subset \VBundle.\] 
	
	Hence, since we established that the $\lambda_i$ are the same as $w_i$, we see from \cite[Table 1]{YZ13} that $\displaystyle \lambda_2 = \frac12,\ \lambda_3 = \frac14$ and we can conclude.
	
\end{proof}

\begin{rem}
	By Kontsevich's formula (see \eg \cite[Theorem 9.2]{bouwmoel}), the $\lambda_i$ are the Lyapunov exponents of the Kontsevich-Zorich cocycle on $C$.  
\end{rem}

\section{Special points of the Teichm\"uller curve}\label{sec:specialpoints}

In this section, we study the three special points of the Kenyon-Smillie Teichm\"uller curve. We compute how real multiplication acts in the orbifold point and we compute the equations of the orbifold point and the two cusps as plane quartics in the distinguished coordinate system given by sections in the eigenform line bundles.

\subsection{Orbifold point}

The orbifold point of the Teichm\"uller curve corresponds to the surface $(S,\omega)$, which admits an automorphism of order $9$. We want to compute the action of this automorphism on the space of holomorphic differentials.

\begin{prop}
\label{prop:orbifoldpt}
 The translation surface $(S,\omega)$ corresponding to the orbifold point on $C$ is the complete non-singular curve $\tilde S$ with singular model given by the affine equation
 \[y^9 = x^2(x-1)^3.\]
The curve  $\tilde S$ is a $9$-sheeted cyclic covering of $\PP^1$ totally branched over $0$ and $\infty$ and branched of order $3$ over $1$. The deck transformation is given by
 \[g: (x,y)\mapsto (x,\zeta_9\cdot y).\] 
 Up to scalar multiples, the eigendifferentials for real multiplication $\omega_0^{(i)}$ are given by
 \begin{align}\label{eqn:differentials-orbifoldpoint}
  \omega_0^{(1)} = \frac{y\, \dd x}{x(x-1)},\quad \omega_0^{(2)} = \frac{y^5\dd x}{x^2(x-1)^2},\quad \omega_0^{(3)} = \frac{y^7\dd x}{x^2(x-1)^3}.
 \end{align}
\end{prop}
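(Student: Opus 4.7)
The plan is to realize $\tilde S$ as a cyclic $9$-sheeted cover of $\PP^1$ using the order-$9$ automorphism attached to the orbifold point, and then decompose $H^0(\tilde S,\Omega^1)$ into eigenspaces.

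Because the orbifold point has order $9$, there exists an automorphism $\tau$ of $(S,\omega)$ of order $9$ with $\tau^*\omega=\zeta_9\,\omega$ for a primitive ninth root of unity $\zeta_9$. (Primitivity is automatic: if $\tau^k$ fixed $\omega$ for some $k<9$ then it would act trivially on $H^0(S,\Omega^1)$ and hence be the identity, contradicting $\ord(\tau)=9$.) The zeros $P$ and $Q$ of $\omega$, having different multiplicities, are both fixed by $\tau$. A local computation at $P$ in a coordinate $z$ with $\omega=z^3\,dz$ shows that $\tau$ acts as $z\mapsto\mu z$ with $\mu^4=\zeta_9$; since $\gcd(4,9)=1$ the root $\mu$ is itself primitive of order $9$, so $P$ is totally ramified for the quotient map $\pi\colon S\to S/\langle\tau\rangle$. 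The analogous argument at $Q$ uses $\nu^2=\zeta_9$ and $\gcd(2,9)=1$. Applying Riemann--Hurwitz to $\pi$ yields a total ramification of $22-18g'$, where $g'$ is the genus of the quotient; since $P$ and $Q$ already contribute $16$, we must have $g'=0$ and exactly one further branch point, of ramification index $3$ (contributing $9-9/3=6$). Hence $\pi$ exhibits $\tilde S$ as a cyclic $9$-cover of $\PP^1$ branched over three points with indices $(9,9,3)$.

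Placing the images of $Q$, the auxiliary branch point, and $P$ at $0$, $1$, $\infty$ respectively, such a cover admits an affine model $y^9=x^a(x-1)^b$ with $\gcd(a,9)=\gcd(a+b,9)=1$ and $\gcd(b,9)=3$; replacing $y$ by $y^k$ for a suitable $k$ coprime to $9$ normalizes $(a,b)$ to $(2,3)$, so that the deck transformation becomes $\tau(x,y)=(x,\zeta_9 y)$. Every $\tau$-eigenspace of $H^0(\tilde S,\Omega^1)$ is spanned by a differential of the form $y^j\,dx/\bigl(x^c(x-1)^d\bigr)$, and computing orders at the three branch points using $\ord_P(y)=-5$, $\ord_Q(y)=2$, and $\ord_R(y)=1$ at each of the three preimages $R$ of $x=1$ pins down the unique exponents $(c,d)$ making such a form holomorphic. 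Only the eigencharacters $j=1,5,7$ carry a nontrivial eigenspace, and they produce precisely the three forms appearing in the statement.

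It remains to match these eigenspaces with the Galois labels $(1),(2),(3)$ of Section~\ref{sec:prelims}. On the $\zeta_9^j$-eigenspace, $v=\zeta_9+\zeta_9^{-1}$ acts by $2\cos(2\pi j/9)$, so a numerical comparison with the three conjugates $v^{(1)},v^{(2)},v^{(3)}$ determines which $j$ corresponds to which embedding. I expect this final matching to be the main subtlety, since it must be compatible with the Harder--Narasimhan normalization of the $\LBundle_i$ fixed in the preliminaries. Remark~\ref{rem:divisors-of-omegai} carries the argument: the constraint $\div(\omega_0^{(2)})\ge P$ singles out the unique candidate vanishing at $P$ (the one with $j=5$) as carrying label $(2)$, while $\omega=\omega_0^{(1)}$ has divisor $3P+Q$, identifying it with $j=1$; the remaining index $j=7$ is then forced to be label $(3)$.
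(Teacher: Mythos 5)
Your proof is correct, and it takes a genuinely more self-contained route than the paper's for the first half of the statement. The paper identifies $(S,\omega)$ with $\tilde S$ by citing the uniqueness (up to isomorphism) of the genus $3$ curve with a $\ZZ/9\ZZ$-action, and then quotes a reference for the fact that the three displayed forms are a basis of $\Hcoh^0(\tilde S,\Omega^1)$; the divisor computations and the identification of the labels via Remark~\ref{rem:divisors-of-omegai} (in particular $\div(\omega_0^{(2)})\geq P$ and $\div(\omega)=3P+Q$) then proceed exactly as in your last paragraph. You instead reconstruct the cyclic cover from scratch: linearizing the order $9$ automorphism at the two zeros to get the rotation numbers $\mu^4=\zeta_9$, $\nu^2=\zeta_9$, running Riemann--Hurwitz to force genus $0$ below and a single extra branch point of index $3$, and then reading off the eigendifferentials by order computations. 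This buys independence from the classification result and, as a bonus, the rotation numbers at $P$ and $Q$ actually pin down the exponents: $\nu=\zeta_9^{a^{-1}}$ forces $a\equiv 2$ and likewise $b\equiv 3 \pmod 9$, which is worth saying explicitly, since your appeal to ``replacing $y$ by $y^k$'' alone does not rule out the inequivalent orbit represented by $(a,b)=(1,3)$ once you have fixed $Q\mapsto 0$ and $P\mapsto\infty$. The only genuinely shaky step is the parenthetical on primitivity: fixing $\omega$ does not by itself imply acting trivially on all of $\Hcoh^0(S,\Omega^1)$. The conclusion is still correct, but argue it instead via the fixed points: if $\tau^{d}$ fixed $\omega$ for $d<9$, it would act at $P$ by a root of unity $\mu$ with $\mu^4=1$ and $\mu^{9/d}=1$, hence $\mu=1$, and a finite-order automorphism of a curve of genus $\geq 2$ fixing a point with trivial derivative is the identity.
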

\begin{proof}
 Up to isomorphism, there is only one curve in genus $3$ admitting a $\ZZ/9\ZZ$-action (cf. \cite{kuribayashi-komiya}), so $(S,\omega)$ and $\tilde S$ are isomorphic. 
To see that the given differentials form a basis of holomorphic differentials see \cite{bouwprank}.
The divisors of these differentials are given by
\[ \div(\omega_0^{(1)})=3p+q,\quad \div(\omega_0^{(2)})=p+\sum_{i=1}^3 l_i,\quad \div(\omega_0^{(3)})=4q\]
where $p$ is the preimage of $\infty\in \PP^1$ , $q$ is the preimage of $0\in \PP^1$ and $l_i$ are the three preimages of $1\in\PP^1$ under the cyclic covering map.

Note that the cyclic group generated by $g$ must be the same as the cyclic group generated by the automorphism of the flat surface $(S,\omega)$. Hence $\omega_0^{(1)}$ in \eqref{eqn:differentials-orbifoldpoint} is proportional to $\omega$, since it is the only eigendifferential of $g^*$  with the right divisor.  By the construction of real multiplication on  Teichm\"uller curves, we now know that its restriction to the fiber over $0$ is given by the action of the totally real subfield $\QQ(\zeta_9+\zeta_9^{-1})=\QQ(v)\leq\QQ(\zeta_9)$. Therefore, the $\omega_0^{(i)}$ given in \eqref{eqn:differentials-orbifoldpoint} are indeed eigendifferentials for the action of real multiplication, and from their divisors and expression~\eqref{eqn:divisors-of-omegai} we see that $\omega_0^{(i)}\in (\LBundle_i)_0$.
\end{proof}

\begin{cor}
\label{cor:matrixorb}
 The action of the order $9$ automorphism $g$ on $H^0(S,\Omega^1)$ with respect to the basis $\{\omega_0^{(i)}\}$ is given by the diagonal matrix
 \[A_g=\diag(\zeta,\zeta^5,\zeta^7) \in \PGL_3(\CC).\]
\end{cor}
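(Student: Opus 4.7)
The plan is to directly compute the pullback of each eigendifferential under the deck transformation $g$, using the explicit formulas established in Proposition~\ref{prop:orbifoldpt}. Since $g$ is the map $(x,y)\mapsto(x,\zeta_9 y)$, it fixes the coordinate $x$ and therefore fixes $\dd x$ as well as every rational function in $x$ alone. Hence the only contribution to $g^*\omega_0^{(i)}$ comes from the power of $y$ appearing in the numerator.

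Concretely, I would substitute $y\mapsto \zeta_9 y$ into each of the three expressions
\[\omega_0^{(1)} = \frac{y\,\dd x}{x(x-1)},\quad \omega_0^{(2)} = \frac{y^5\,\dd x}{x^2(x-1)^2},\quad \omega_0^{(3)} = \frac{y^7\,\dd x}{x^2(x-1)^3},\]
and observe that $y$ enters with exponents $1$, $5$, $7$ respectively. Thus $g^*\omega_0^{(i)}= \zeta_9^{e_i}\omega_0^{(i)}$ with $(e_1,e_2,e_3)=(1,5,7)$, yielding the claimed diagonal matrix $A_g = \diag(\zeta,\zeta^5,\zeta^7)$ in the basis $\{\omega_0^{(i)}\}$.

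There is no real obstacle here: the corollary is a one-line verification once the eigendifferentials have been explicitly identified in Proposition~\ref{prop:orbifoldpt}. The only conceptual point worth flagging is that the matrix is genuinely diagonal (and not merely block-diagonal) precisely because the $\omega_0^{(i)}$ were chosen as eigenforms for $g^*$; this is compatible with the fact that $g$ generates the cyclic group acting as real multiplication by $\QQ(\zeta_9+\zeta_9^{-1})$ on the fiber, and the eigenvalues $\zeta, \zeta^5, \zeta^7$ are exactly the images of $\zeta_9$ under the three complex embeddings of $\QQ(\zeta_9)$ whose restriction to $\QQ(v)$ matches the ordering $\sigma_1,\sigma_2,\sigma_3$ fixed in Section~\ref{sec:prelims}.
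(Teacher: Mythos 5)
Your computation is correct and is exactly the argument the paper intends: the corollary is stated without a separate proof because, as you observe, it is the one-line verification $g^*\omega_0^{(i)}=\zeta_9^{e_i}\omega_0^{(i)}$ with $(e_1,e_2,e_3)=(1,5,7)$ read off from the explicit formulas in Proposition~\ref{prop:orbifoldpt}. Nothing further is needed.
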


Even though we will not need it in the next sections, it is nice to present the smooth quartic model of the curve $S$ given via the canonical embedding.

\begin{cor}
	Up to isomorphism, the  curve $S=S_0$ is given as the vanishing locus in $\PP^2$ of
\[	F_0(X,Y,Z) = X^4+XZ^3+3Y^3Z.\]
\end{cor}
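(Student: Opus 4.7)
The plan is to realize $S_0$ as its canonical image in $\PP^2$ and then exploit the $\ZZ/9\ZZ$-symmetry to pin down the defining equation. Since $S_0$ is a non-hyperelliptic curve of genus $3$ (a hyperelliptic genus $3$ curve cannot carry an order $9$ automorphism, and alternatively one sees this directly from the model $y^9 = x^2(x-1)^3$ of Proposition~\ref{prop:orbifoldpt}), its canonical map is an embedding whose image is a smooth plane quartic. I would use the basis $\{\omega_0^{(1)}, \omega_0^{(2)}, \omega_0^{(3)}\}$ of $H^0(S_0, \Omega^1)$ from Proposition~\ref{prop:orbifoldpt} to put homogeneous coordinates $[X:Y:Z]$ on $\PP^2$, so that $S_0$ is cut out by some $F_0 \in \CC[X,Y,Z]_4$ that remains to be determined.

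Next, I would bring in the automorphism $g$. By Corollary~\ref{cor:matrixorb}, $g$ acts on $\PP^2$ via the diagonal matrix $\diag(\zeta,\zeta^5,\zeta^7)$, so $F_0$ must be a simultaneous eigenvector under this action; equivalently, every monomial $X^a Y^b Z^c$ with $a+b+c=4$ appearing in $F_0$ must share a common character $\zeta^{a+5b+7c} \pmod 9$. Tabulating the fifteen characters, one finds that a single character, namely $\zeta^4$, is achieved by exactly three monomials of degree $4$: the monomials $X^4$, $XZ^3$, and $Y^3Z$. Hence $F_0 = \alpha X^4 + \beta XZ^3 + \gamma Y^3 Z$ for some $\alpha,\beta,\gamma \in \CC$.

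Finally, I would verify non-degeneracy and then normalize. If $\alpha = 0$ or $\gamma = 0$ then $F_0$ is divisible by $Z$ or $X$, making the curve reducible; if $\beta = 0$, a quick partial-derivative check shows that $(0:0:1)$ is a singular point of $F_0 = 0$. Each of these contradicts the smoothness of $S_0$, so all three coefficients are nonzero. A diagonal change of variables $X \mapsto aX$, $Y \mapsto bY$, $Z \mapsto cZ$, combined with an overall projective rescaling, supplies four parameters available to normalize the three coefficients to any prescribed nonzero values; the choice $(1,1,3)$ then yields the stated form. The main content of the argument is the character computation in the second step; once the three invariant monomials are isolated, the remaining verification is routine.
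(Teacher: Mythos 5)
Your proof is correct, but it takes a genuinely different route from the paper's. The paper proves the corollary by direct verification: it substitutes explicit rescalings of the eigendifferentials from Proposition~\ref{prop:orbifoldpt} (namely $X=-\zeta_3\omega_0^{(1)}$, $Y=\sqrt[3]{\zeta_3/3}\,\omega_0^{(2)}$, $Z=\omega_0^{(3)}$) into the candidate quartic and checks that the relation holds in $\Hcoh^0(S,\omega_S^{4})$ on the model $y^9=x^2(x-1)^3$. You instead derive the \emph{shape} of the equation from representation theory: since the defining quartic of a canonically embedded genus $3$ curve is unique up to scalar, it must be an eigenvector for the diagonal action $\diag(\zeta,\zeta^5,\zeta^7)$ of Corollary~\ref{cor:matrixorb}, hence supported on monomials of a single character $a+5b+7c \bmod 9$; your tabulation is correct, and $X^4$, $XZ^3$, $Y^3Z$ are indeed the unique character class with three monomials. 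One small step you should make explicit: to conclude that the character is $4$ (rather than one of the other eight), observe that every other character class contains at most two monomials, and each such one- or two-monomial quartic is visibly reducible, contradicting the irreducibility of $S_0$; your subsequent nondegeneracy and diagonal-rescaling arguments are then fine and suffice for the statement as phrased (``up to isomorphism''). What your approach buys is a conceptual explanation of why the equation has exactly this support — it is in fact the same symmetry argument the paper deploys later, in Proposition~\ref{prop:coeffshape}, for the whole family. What it does \emph{not} give you is the paper's specific normalization of coefficients relative to the eigendifferential basis, which the paper needs for consistency with the coordinates used at the cusps; that extra information is exactly what the paper's explicit substitution provides.
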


\begin{proof}
	One checks that putting $\displaystyle X = -\zeta_3 \omega_0^{(1)}$, $Y = \sqrt[3]{\frac{\zeta_3}{3}}\omega_0^{(2)}$, $Z = \omega_0^{(3)}$, where $\zeta_3$ is a third root of unity, we obtain the above relation in $\Hcoh^0(S,\omega_{S}^4)$. The normalizing coefficients are put in order to have an  equation consistent with the choices made later on.
\end{proof}
\subsection{{Cusps}}
One of the two cusps of the Teichm\"uller curve is an irreducible stable curve, while the other one is reducible.
We choose $t$ such that the reducible stable curve lies over $t = \infty$ and the irreducible stable curve over $t = 1$.

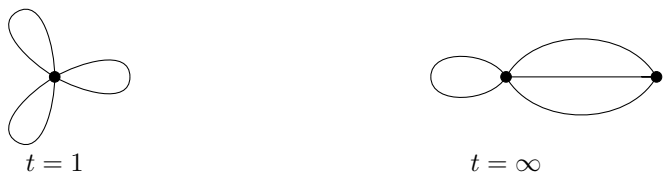
\begin{figure}[ht]
 \begin{center}
  \begin{tikzpicture}[scale=2]
   \begin{scope}
    \coordinate (v) at (0,0) {};
    \coordinate (v1) at (0:0.5);
    \coordinate (v2) at (120:0.5);
    \coordinate (v3) at (240:0.5);
    \draw[fill=black] (v) circle (1pt);
    \draw (v) to[out=-30, in=270] (v1);
    \draw (v) to[out=30, in=90] (v1);
    \draw (v) to[out=90, in=30] (v2);
    \draw (v) to[out=150, in=210] (v2);
    \draw (v) to[out=210, in=150] (v3);
    \draw (v) to[out=270, in=330] (v3);
    \node[below=6ex] (0,0) {$t=1$};
   \end{scope}
   \begin{scope}[shift={(3,0)}]
    \coordinate (v1) at (0,0); 
    \coordinate (v2) at (1,0);
    \node[below=6ex] (0.5,0) {$t=\infty$};
    \coordinate (a) at (180:0.5);
    \draw[fill=black] (v1) circle (1pt);
    \draw[fill=black] (v2) circle (1pt);
    \draw (v1) to[out=0, in=0] (0.9,0);
    \draw (v1) to[out=60, in=120] (v2);
    \draw (v1) to[out=-60, in=240] (v2);
    \draw (v1) to[out=120, in=90] (a);
    \draw (v1) to[out=240, in=270] (a);
   \end{scope}
  \end{tikzpicture}
 \end{center}
\caption{Dual graphs of the two cusps of the Teichm\"uller curve. The vertices represent the connected components and the edges correspond to the nodes of the stable curves associated with the cusps.}
\label{fig:dualgraphs}
\end{figure}

Recall that the topological type of a cusp is obtained by contracting the core curves of cylinders in the  cylinder decomposition of $(S,\omega)$ in the periodic direction corresponding to the cusp (see for example \cite[Prop. 5.9]{moelPCMI}).

\subsection{Reducible cusp}

The reducible cusp  corresponds to the vertical cylinder decomposition of $(S,\omega)$ shown in Figure~\ref{fig:vert-cylinderdecomp}. 

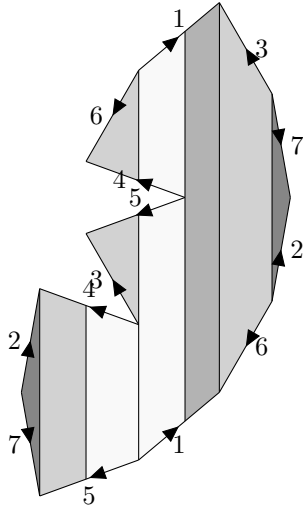
\begin{figure}[ht]
 \begin{center}
  \begin{tikzpicture}[scale=1.4, 
	ar/.style ={decoration={             
            markings, 
            mark=at position 0.5 with {\arrow{triangle 45}}
        }, postaction={decorate}}] 
\clip (-0.5,-0.2) rectangle (2.5,5.5);

\coordinate (p0) at (0,0);
\coordinate (p1) at (20:1);
\coordinate (p2) at ($(p1)+(40:1)$);
\coordinate (p3) at ($(p2)+(60:1)$);
\coordinate (p4) at ($(p3)+(80:1)$);
\coordinate (p5) at ($(p4)+(100:1)$);
\coordinate (p6) at ($(p5)+(120:1)$);
\coordinate (p7) at (100:1);
\coordinate (p8) at ($(p7)+(80:1)$);
\coordinate (p9) at ($(p8)+(340:1)$);
\coordinate (p10) at ($(p9)+(120:1)$);
\coordinate (p11) at ($(p10)+(20:1)$);
\coordinate (p12) at ($(p11)+(160:1)$);
\coordinate (p13) at ($(p12)+(60:1)$);
\coordinate (p14) at ($(p13)+(40:1)$);

\draw[ar, name path=side51] (p1)-- node[below] {$5$} (p0);
\draw[ar, name path=side11] (p1)-- node[below] {$1$} (p2);
\draw[ar, name path=side61] (p3)-- node[below,right] {$6$} (p2);
\draw[ar, name path=side21] (p3)-- node[right] {$2$} (p4);
\draw[ar, name path=side71] (p5)-- node[right] {$7$} (p4);
\draw[ar, name path=side31] (p5)-- node[right] {$3$} (p6);
\draw[ar, name path=side72] (p7)-- node[left] {$7$} (p0);
\draw[ar, name path=side22] (p7)-- node[left] {$2$} (p8);
\draw[ar, name path=side41] (p9)-- node[left, above] {$4$} (p8);
\draw[ar, name path=side32] (p9)-- node[left] {$3$} (p10);
\draw[ar, name path=side52] (p11)-- node[above] {$5$} (p10);
\draw[ar, name path=side42] (p11)-- node[left] {$4$} (p12);
\draw[ar, name path=side62] (p13)-- node[left] {$6$} (p12);
\draw[ar, name path=side12] (p13)-- node[left, above] {$1$} (p14);

\path[name path=p9vert] ($(p9)+(0,-10)$)--($(p9)+(0,10)$);\draw[name intersections={of=p9vert and side52,by=x1}] (p9)--(x1);
\draw[name intersections={of=p9vert and side42,by=x2}] (x2)--(p13);
\path[name path=p11vert] ($(p11)+(0,-10)$)--($(p11)+(0,10)$);
\draw[name intersections={of=p11vert and side12,by=x3}] (p11)--(x3);
\draw (p1)--(p9);
\draw[name intersections={of=p11vert and side11,by=x4}] (p11)--(x4);
\path[name path=p10vert] ($(p10)+(0,-10)$)--($(p10)+(0,10)$);
\path[name intersections={of=p10vert and side51,by=x5}];
\draw[name intersections={of=p10vert and side41,by=x6}] (x5)--(x6);
\draw[name path=p2vert] (p2)--(p6);
\draw[name path=p0vert] (p0)--(p8);
\draw[name path=p3vert] (p3)--(p5);
\begin{scope}[on background layer]
\fill[gray!5] (x5)--(p1)--(p9)--(x6)--cycle;
\fill[gray!5] (p11)--(x3)--(p13)--(x2)--cycle;
\fill[gray!5] (p1)--(x4)--(p11)--(x1)--cycle;

\fill[gray!35] (p0)--(x5)--(x6)--(p8)--cycle;
\fill[gray!35] (p9)--(x1)--(p10)--cycle;
\fill[gray!35] (p12)--(x2)--(p13)--cycle;
\fill[gray!35] (p2)--(p3)--(p5)--(p6)--cycle;

\fill[gray!60] (x4)--(p2)--(p6)--(x3)--cycle;

\fill[gray!95] (p0)--(p8)--(p7)--cycle;
\fill[gray!95] (p3)--(p5)--(p4)--cycle; 
\end{scope}
  \end{tikzpicture}
 \end{center}
\caption{Vertical cylinder decomposition of $S_{\infty}$ with cylinders $A$, $B$, $C$, $D$ (from light to dark).
\label{fig:vert-cylinderdecomp}
}
\end{figure}

\begin{prop}\label{prop:redcusp-differential}
 \begin{enumerate}[a)]
  \item  The stable curve $(S_\infty,\omega_\infty)$ consists of two $\PP^1$ 
 \[S_\infty = T\cup U\]
 that meet at $3$ points $B,C,D$.
 The only other singular point is a node $A$, where $T$ is glued to itself.
  \item The projective tuple of residues of the stable differential is given by
  \[(r_A:r_B:r_C:r_D) = (-v^2 - v + 3\ : \ 1\ : \ v^2 -3\ : \ -v^2 + 2).\]
  \item Up to scaling and isomorphism, the stable differential $\omega_\infty$ on the normalization is given by
  \begin{align}
   \label{eqn:omega1_red}
   \omega_\infty\big|_{T} &= \mu_\infty\left(\frac{r_A}{z-1} + \frac{-r_A}{z+1} + \frac{r_B}{z-B} + \frac{r_C}{z-C} + \frac{r_D}{z-D}\right)dz\\
   \omega_\infty\big|_{U} &= \mu_\infty\left(\frac{-r_B}{z} + \frac{-r_C}{z-1} + \frac{-r_D}{z+1}\right)dz
  \end{align}
  where
  \begin{align*}B &= \tfrac{1}{17}(-2v^2 + 6v + 5), & 
    C &= \tfrac{1}{17}(-6v^2 - 8v + 13), & 
    D &= \tfrac{1}{17}(8v^2 + 2v - 15),
  \end{align*}
  and $\mu_\infty = v^2 + 2v - 2$.
 \end{enumerate}
\end{prop}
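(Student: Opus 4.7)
The plan is to read off the data of the stable curve $(S_\infty,\omega_\infty)$ from the vertical cylinder decomposition of $(S,\omega)$ depicted in Figure~\ref{fig:vert-cylinderdecomp}, which consists of four cylinders $A$, $B$, $C$, $D$.

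For part (a), each cylinder collapses to a node in the stable limit. The irreducible components of $S_\infty$ are the connected components of the complement of the cylinders, after identifying boundary saddle connections according to the labels $1,\dots,7$. Tracing these identifications shows that cylinders $B$, $C$, $D$ each join two distinct components $T$ and $U$, while both boundary circles of cylinder $A$ lie on $T$, producing a self-node. Both components must be rational: the arithmetic genus equals $g(S)=3$ and the dual graph (two vertices joined by three edges, with a loop on one of them) already contributes $3$ to this count through its first Betti number, so $g(T) = g(U) = 0$.

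For part (b), the residue of $\omega_\infty$ at a node is proportional (up to sign) to the circumference of the collapsing cylinder. Each circumference is the horizontal component of a $\mathbb{Z}$-linear combination of the holonomies $\zeta_9^k$ labelling the edges in Figure~\ref{fig:vert-cylinderdecomp}. Expressing these in the basis $\{1,v,v^2\}$ of $\mathbb{Q}(v)$ via $2\cos(2\pi k/9) \in \mathbb{Q}(v)$ and rescaling so that $r_B=1$ yields the stated projective tuple. A built-in sanity check is that $r_B+r_C+r_D = 1 + (v^2-3) + (-v^2+2) = 0$, as required by the residue theorem on $T$ once we demand that $\infty \in T$ be a regular point of $\omega_\infty|_T$.

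For part (c), we use $\mathrm{Aut}(T) = \PGL_2$ to send the two preimages of the self-node $A$ to $\pm 1 \in T$, and $\mathrm{Aut}(U) = \PGL_2$ to send the preimages of $B$, $C$, $D$ on $U$ to $0$, $1$, $-1$ respectively. A rational differential on $\mathbb{P}^1$ is determined uniquely by the locations and residues of its simple poles, so formulas~\eqref{eqn:omega1_red} follow as soon as the coordinates $B, C, D \in T$ of the remaining three poles and the global scalar $\mu_\infty$ are known. These four quantities are pinned down by matching the non-vanishing absolute periods of $\omega_\infty$ along surviving cycles of $S_\infty$ with the limits of the corresponding periods of $\omega_t$, which can be read off the flat picture. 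The main obstacle is the bookkeeping: consistently orienting the saddle connections and cycles so that the signs of the residues and periods match, and then solving the resulting small linear system over $\mathbb{Q}(v)$ for $B$, $C$, $D$, and $\mu_\infty$. Beyond this, the computation is mechanical.
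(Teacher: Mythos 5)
Parts (a) and (b) are fine and match the paper: the topological type and the projective tuple of residues are read off from the vertical cylinder decomposition in Figure~\ref{fig:vert-cylinderdecomp}, and the vanishing of $r_B+r_C+r_D$ is the residue theorem on $U$. The gap is in part (c), in your proposed method for locating the poles $B,C,D$ on $T$. On a rational component, $H_1$ of the punctured sphere is generated by loops around the punctures, so \emph{every} period of $\omega_\infty\big|_{T}$ is $2\pi i$ times a residue; there are no other surviving cycles on $S_\infty$ (the cycles dual to the vanishing cycles have divergent periods, since they run through the infinite cylinders). By partial fractions, a differential on $\PP^1$ with prescribed simple poles and prescribed residues exists for \emph{any} choice of pole locations, so period data cannot pin down $B$, $C$, $D$: your ``small linear system'' is empty. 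Relatedly, fixing only the two preimages of $A$ at $\pm 1$ leaves a one-parameter group of residual automorphisms of $T$, so the positions are not even well defined in your normalization.

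The missing input is the zero divisor of $\omega_\infty$. Since $\omega_\infty$ is the stable limit of eigenforms in the stratum $(3,1)$, the restriction $\omega_\infty\big|_{T}$ must have a zero of order $3$ at $P_\infty\in T$ (it has five simple poles, hence zeros of total degree $3$, all concentrated at $P_\infty$). The paper normalizes $T$ by sending the preimages of $A$ to $\pm1$ \emph{and} $P_\infty$ to $0$, which kills the residual automorphisms, and then imposes the triple-zero condition at $0$ on the differential with the given residues; this is an algebraic system in $(B,C,D)$ with exactly two solutions. The two are distinguished by a second piece of information you also do not use: by Remark~\ref{rem:divisors-of-omegai}, the Galois conjugate $\omega_\infty^{(2)}$ (obtained by replacing $v=v^{(1)}$ by $v^{(2)}$ in the residues) must also vanish at $P_\infty$, and only the triple $(B_1,C_1,D_1)$ stated in the proposition satisfies this. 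Without these two conditions your argument cannot produce the claimed values of $B$, $C$, $D$ (nor, consequently, the normalization $\mu_\infty$).
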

\par
The factor $\mu_\infty$ in front is chosen in such a way that the equation for the stable curve in Corollary~\ref{cor:redcusp-equation} has rational coefficients.
\par
\begin{proof}
 The topological type of $S_\infty$ and the projective tuple of residues of the stable differential can be read  off directly from the flat picture. The cusp at $\infty$ corresponds to replacing the cylinders of the cylinder decomposition of $S$ in the vertical direction (see Figure~\ref{fig:vert-cylinderdecomp}) by infinitely long cylinders. The residues of the stable differential $\omega_\infty$ at the nodes are given by the circumferences of the cylinders up to simultaneous multiplication by a non-zero scalar.
 
 We identify the normalization of $T$ with $\PP^1$ by sending the two preimages of $A$ to $1$ and $-1$ and the triple zero of $\omega_\infty$ to $0$. One checks that there are precisely two differentials on $\PP^1$ 
 with simple poles at $1,-1$ and at three other points $B,C,D$ with the residues given above and a threefold zero at $P=0$. These two differentials corresponds to the triples
\[ B_1 = \tfrac{1}{17}(-2v^2 + 6v + 5),\quad C_1 = \tfrac{1}{17}(-6v^2 - 8v + 13),\quad  D_1 = \tfrac{1}{17}(8v^2 + 2v - 15)\]
 and
\[B_2 = \tfrac{1}{19}(6v^2 + 18v -21),\quad C_2 = \tfrac{1}{19}(-18v^2 - 12v + 27),\quad   D_2 = \tfrac{1}{19}(12v^2 - 6v - 33).\]
 After applying Galois conjugation  to the residues, exchanging $v=v^{(1)}$ with $v^{(2)}$, by Remark~\ref{rem:divisors-of-omegai} the Galois conjugate $\omega_{\infty}^{(2)}$  has to have  a zero at $z=0$. One checks that only the differential corresponding to the triple $(B_1,C_1,D_1)$ has this property.
 \par
 In the same way, we identify $U$ with $\PP^1$ by sending the three nodes $B$, $C$, $D$ to $0$, $1$ and $-1$ and we obtain $\omega_\infty\big|_{U}$.
\end{proof}

\begin{cor}\label{cor:redcusp-equation}
 Up to isomorphism, the stable curve $S_\infty$ is given as the vanishing locus in $\PP^2$ of
 \begin{align}\label{eqn:Fred}
 \begin{split}
 F_\infty(X,Y,Z) &= X^4 - 3X^3Y + 6X^3Z - 3X^2Y^2 - 6X^2YZ + 6X^2Z^2\\& + 4XY^3 -6XY^2Z -6XYZ^2 + XZ^3 + 3Y^4 + 3Y^3Z\\
 &= (X+Y+Z) \\
 &\cdot (X^3 -4X^2Y + 5X^2Z + XY^2 -7XYZ + XZ^2 + 3Y^3).
\end{split}
 \end{align}
\end{cor}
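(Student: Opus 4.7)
The plan is to use Proposition~\ref{prop:redcusp-differential} to parameterize each component of $S_\infty$ under the canonical embedding and then to extract the defining polynomial by elimination. First I would extend the basis of sections of $\omega_{S_\infty}$: the Galois conjugates $\omega_\infty^{(2)}$ and $\omega_\infty^{(3)}$ are obtained from $\omega_\infty^{(1)} = \omega_\infty$ by replacing the residue tuple $(r_A,r_B,r_C,r_D)$ with its images under $v\mapsto v^{(2)}$ and $v\mapsto v^{(3)}$, with multiplicative constants fixed by Remark~\ref{rem:divisors-of-omegai} (requiring $\omega_\infty^{(2)}$ to vanish at $P_\infty$ and $\omega_\infty^{(3)}$ to have its quadruple zero at $Q_\infty$). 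I would then set $X = -\zeta_3\,\omega_\infty^{(1)}$, $Y = \sqrt[3]{\zeta_3/3}\,\omega_\infty^{(2)}$, $Z = \omega_\infty^{(3)}$, matching the normalization used for $S_0$.

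On the component $U$, the restriction of $\omega_{S_\infty}$ has degree $2g(U) - 2 + 3 = 1$, so $U$ maps to a line in $\PP^2$. Using the coordinate on $U$ from Proposition~\ref{prop:redcusp-differential} (nodes at $0, 1, -1$) and the residues obtained by Galois conjugation, one evaluates the triple $(X:Y:Z)$ at any three values of $z$ and checks that the unique linear form in $X,Y,Z$ vanishing on all three images is $X+Y+Z$.

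On $T$, the restriction of $\omega_{S_\infty}$ has degree $2g(T) - 2 + 2 + 3 = 3$, counting both preimages of the self-node $A$ and the three nodes $B, C, D$, so the image is a nodal plane cubic. I would compute its equation by parametric elimination: the triple $(X(z),Y(z),Z(z))$ defines a ring homomorphism $\CC[X,Y,Z]\to\CC(z)$, and a linear-algebra computation on its degree-$3$ kernel (initially over $\QQ(v)$, but with rational coefficients at the end thanks to the chosen factor $\mu_\infty = v^2 + 2v - 2$) produces the cubic factor in \eqref{eqn:Fred}. Finally, expanding the product of this cubic with $X+Y+Z$ and comparing coefficients to $F_\infty(X,Y,Z)$ concludes the proof. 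The main computational obstacle is the elimination step on $T$, and the specific value of the normalizing constant $\mu_\infty$ is fixed precisely so that all $v$-denominators cancel and the resulting quartic has integer coefficients.
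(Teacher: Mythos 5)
Your proposal follows essentially the same route as the paper: form the three Galois conjugates $\omega_\infty^{(i)}$ of the stable differential from Proposition~\ref{prop:redcusp-differential}, use them as the coordinates of the canonical map, and extract the degree-four relation they satisfy. Your component-by-component bookkeeping (the dualizing sheaf has degree $1$ on $U$ and degree $3$ on $T$, so the image is a line union a nodal cubic) is exactly what makes the displayed factorization of $F_\infty$ transparent; the paper's ``one checks'' is the same computation carried out in one step.

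The one point you should fix is the normalization of the coordinates. You import the constants $X=-\zeta_3\,\omega^{(1)}$, $Y=\sqrt[3]{\zeta_3/3}\,\omega^{(2)}$, $Z=\omega^{(3)}$ from the orbifold fibre, but in the paper the reducible cusp is the reference fibre: there one takes $X=\omega_\infty^{(1)}$, $Y=\omega_\infty^{(2)}$, $Z=\omega_\infty^{(3)}$ with no extra constants (all scalars having been absorbed into the Galois conjugates of $\mu_\infty$), and it is the coordinates at $s=0$ and $s=1$ that carry correction factors so as to be consistent with this choice. With your rescaling $X=c_1\omega^{(1)}$, etc., the elimination would produce $F_\infty(X/c_1,Y/c_2,Z/c_3)$ up to an overall scalar, whose coefficients do not match \eqref{eqn:Fred}, so your final ``compare coefficients'' step would fail as written. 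Since the statement is only ``up to isomorphism,'' this is repaired by a diagonal change of coordinates; but to land on the stated polynomial you must use the identity normalization at $\infty$. A second, smaller point: the divisor conditions of Remark~\ref{rem:divisors-of-omegai} only determine which Galois conjugate of the residue tuple belongs to which eigenform bundle, not the multiplicative constants of the $\omega_\infty^{(i)}$, which remain free and are fixed only by the convention just described.
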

\begin{proof}
 One checks that putting $X = \omega_\infty^{(1)}$, $Y = \omega_\infty^{(2)}$, $Z = \omega_\infty^{(3)}$, where
 \begin{align*}
  \omega_\infty^{(i)}\big|_{T} &= \mu_\infty^{(i)}\left(\frac{r_A^{(i)}}{z-1} + \frac{-r_A^{(i)}}{z+1} + \frac{r_B^{(i)}}{z-B} + \frac{r_C^{(i)}}{z-C} + \frac{r_D^{(i)}}{z-D}\right)dz\\
  \omega_\infty^{(i)}\big|_{U} &= \mu_\infty^{(i)}\left(\frac{-r_B^{(i)}}{z} + \frac{-r_C^{(i)}}{z-1} + \frac{-r_D^{(i)}}{z+1}\right)dz\qquad (i=1,2,3)
 \end{align*}
 are the three Galois conjugates of $\omega_\infty$, we obtain the above relation in $\Hcoh^0(S_\infty,\omega_{S_\infty}^4)$.
\end{proof}

\subsection{Irreducible cusp}

The irreducible cusp and its stable differential has been described in \cite[Example 13.8]{bainmoel}. We recall their description. It is obtained from the horizontal cylinder decomposition of $S_0$ shown in Figure~\ref{fig:hor-cylinderdecomp} by replacing each half of a cylinder by a half-infinite strip of the same width.

\begin{figure}[ht]
 \begin{center}
  \begin{tikzpicture}[scale=1.4,
  	ar/.style ={decoration={             
            markings, 
            mark=at position 0.5 with {\arrow{triangle 45}}
        }, postaction={decorate}}] 
\clip (-0.5,-0.2) rectangle (2.5,5.5);
\coordinate (p0) at (0,0);
\coordinate (p1) at (20:1);
\coordinate (p2) at ($(p1)+(40:1)$);
\coordinate (p3) at ($(p2)+(60:1)$);
\coordinate (p4) at ($(p3)+(80:1)$);
\coordinate (p5) at ($(p4)+(100:1)$);
\coordinate (p6) at ($(p5)+(120:1)$);
\coordinate (p7) at (100:1);
\coordinate (p8) at ($(p7)+(80:1)$);
\coordinate (p9) at ($(p8)+(340:1)$);
\coordinate (p10) at ($(p9)+(120:1)$);
\coordinate (p11) at ($(p10)+(20:1)$);
\coordinate (p12) at ($(p11)+(160:1)$);
\coordinate (p13) at ($(p12)+(60:1)$);
\coordinate (p14) at ($(p13)+(40:1)$);

\draw[ar, name path=side51] (p1)-- node[below] {$5$} (p0);
\draw[ar, name path=side11] (p1)-- node[below] {$1$} (p2);
\draw[ar, name path=side61] (p3)-- node[below,right] {$6$} (p2);
\draw[ar, name path=side21] (p3)-- node[right] {$2$} (p4);
\draw[ar, name path=side71] (p5)-- node[right] {$7$} (p4);
\draw[ar, name path=side31] (p5)-- node[right] {$3$} (p6);
\draw[ar, name path=side72] (p7)-- node[left] {$7$} (p0);
\draw[ar, name path=side22] (p7)-- node[left] {$2$} (p8);
\draw[ar, name path=side41] (p9)-- node[above] {$4$} (p8);
\draw[ar, name path=side32] (p9)-- node[left] {$3$} (p10);
\draw[ar, name path=side52] (p11)-- node[left] {$5$} (p10);
\draw[ar, name path=side42] (p11)-- node[below] {$4$} (p12);
\draw[ar, name path=side62] (p13)-- node[left] {$6$} (p12);
\draw[ar, name path=side12] (p13)-- node[left, above] {$1$} (p14);

\path[name path=p1hor] ($(p1)+(-10,0)$)--($(p1)+(10,0)$);\draw[name intersections={of=p1hor and side72,by=x1}] (p1)--(x1);
\draw[name path=p2hor] (p2) -- (p7);
\path[name path=p9hor] ($(p9)+(-10,0)$)--($(p9)+(10,0)$);
\draw[name intersections={of=p9hor and side22,by=x2}] (p9)--(x2);
\draw[name intersections={of=p9hor and side61,by=x3}] (p9)--(x3);
\path[name path=p3hor] ($(p3)+(-10,0)$)--($(p3)+(10,0)$);
\draw[name intersections={of=p3hor and side32,by=x4}] (p3)--(x4);
\path[name path=p10hor] ($(p10)+(-10,0)$)--($(p10)+(10,0)$);
\draw[name intersections={of=p10hor and side21,by=x5}] (p10)--(x5);
\draw (p4)--(p11);
\path[name path=p12hor] ($(p12)+(-10,0)$)--($(p12)+(10,0)$);
\draw[name intersections={of=p12hor and side71,by=x6}] (p12)--(x6);
\path[name path=p5hor] ($(p5)+(-10,0)$)--($(p5)+(10,0)$);
\draw[name intersections={of=p5hor and side62,by=x7}] (p5)--(x7);
\path[name path=p13hor] ($(p13)+(-10,0)$)--($(p13)+(10,0)$);
\draw[name intersections={of=p13hor and side31,by=x8}] (p13)--(x8);

\begin{scope}[on background layer]
\fill[gray!5] (p1)--(p2)--(p7)--(x1)--cycle;
\fill[gray!5] (p12)--(x6)--(p5)--(x7)--cycle;
\fill[gray!5] (p7)--(p2)--(x3)--(x2)--cycle;
\fill[gray!5] (x4)--(p3)--(x5)--(p10)--cycle;
\fill[gray!5] (p13)--(x8)--(p6)--cycle;

\fill[gray!45] (p9)--(x3)--(p3)--(x4)--cycle;
\fill[gray!45] (x7)--(p5)--(x8)--(p13)--cycle;

\fill[gray!90] (p0)--(p1)--(x1)--cycle;
\fill[gray!90] (p10)--(x5)--(p4)--(p11)--cycle;
\fill[gray!90] (x2)--(p9)--(p8)--cycle;
\fill[gray!90] (p11)--(p12)--(x6)--(p4)--cycle;

\end{scope}
  \end{tikzpicture}
 \end{center}
 \caption{Horizontal cylinder decomposition of $S_0$ with cylinders $C_1$, $C_2$, $C_3$ (from light to dark).
  \label{fig:hor-cylinderdecomp}
}
\end{figure}
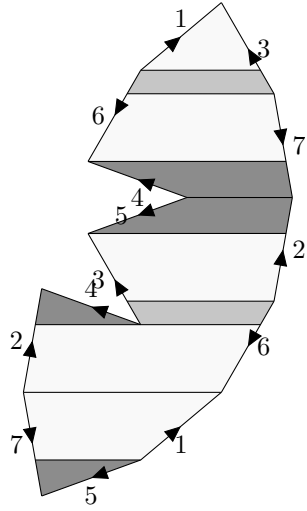

\begin{prop}\label{prop:irredcusp-differential}
 \begin{enumerate}[a)]
  \item The stable curve $S_1$ is isomorphic to a $\PP^1$ with $3$ pairs of points identified.
  \item The projective tuple of residues of the stable differential $\omega_1$ is given by
  \[(r_1 : r_2 : r_3) = (-v^2 -v \ : \ v + 1\ :\ -2v^2 -3v +2).\]
  \item Up to scaling and isomorphism, the stable differential $\omega_1$ on the normalization is given by
  \begin{align*}
   \omega_1 &= \mu_1 \cdot \sum_{i=1}^3 \biggl(\frac{r_i}{z-x_i} - \frac{r_i}{z-\zeta_3 x_i}\biggr),
  \end{align*}
  where $\zeta_3 = \exp(2\pi i/3)$, $\mu_1 = -v^2 + 2$ and
  \[x_1 = 1,\qquad x_2 = 2-v^2,\qquad x_3 = v^2 - 3.\]
 \end{enumerate}
\end{prop}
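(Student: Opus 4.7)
The proof proceeds in parallel with the argument for the reducible cusp in Proposition~\ref{prop:redcusp-differential}.

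For part (a), I would read off the topological type of $S_1$ from the dual graph on the left of Figure~\ref{fig:dualgraphs}, obtained by contracting the three horizontal cylinders of $(S,\omega)$ pictured in Figure~\ref{fig:hor-cylinderdecomp}. Since the graph consists of a single vertex with three self-loops, $S_1$ is irreducible with three self-nodes. The arithmetic-genus formula then forces the normalization to have geometric genus $0$ and hence to be isomorphic to $\PP^1$.

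For part (b), up to a common non-zero scalar the residue of the stable differential at each node equals the circumference of the corresponding horizontal cylinder of $(S,\omega)$. Reading these three circumferences from Figure~\ref{fig:KS-unfolding} as integer linear combinations of the labels $\zeta_9^k$ and re-expressing them in the $\QQ$-basis $\{1,v,v^2\}$ of $\K(S,\omega)$ produces the claimed projective tuple.

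For part (c), once the residue triple from (b) is fixed, the stable differential on the normalization $\PP^1$ must take the form
\[ \omega_1 = \sum_{i=1}^3 r_i \left(\frac{1}{z - p_i} - \frac{1}{z - q_i}\right) dz, \]
with $(p_i, q_i)$ the preimages of the $i$-th node. I would use the PGL$_2(\CC)$-action to place the triple zero $P_1$ at $\infty$ and $p_1$ at $1$, spending the remaining one-parameter freedom on one further marked-point choice. Expanding $\omega_1$ in the local parameter $w = 1/z$, the condition of an order-three vanishing at infinity translates into the three identities
\[ \sum_{i=1}^3 r_i\,(p_i^k - q_i^k) = 0, \qquad k = 1, 2, 3, \]
and, exactly as in the reducible-cusp argument, the Galois constraint $\div(\omega_1^{(2)}) \geq P_1$ from Remark~\ref{rem:divisors-of-omegai} supplies the additional equations needed to cut the resulting algebraic system down to a finite solution set. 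A direct computation over $\K(S,\omega) = \QQ(v)$ then exhibits the solution with the symmetric shape $\{x_i, \zeta_3 x_i\}$ for the preimages of the nodes, with the $x_i$ displayed in the statement; the overall prefactor $\mu_1 = -v^2 + 2$ is finally pinned down by demanding that the coordinates $X = \omega_1^{(1)}$, $Y = \omega_1^{(2)}$, $Z = \omega_1^{(3)}$ used for the canonical embedding match the normalizations of Proposition~\ref{prop:orbifoldpt} and Corollary~\ref{cor:redcusp-equation}.

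The main difficulty I anticipate is not the overall strategy but the bookkeeping in part (c): from the finitely many candidate configurations produced by the triple-zero and Galois conditions, one must pick out the geometric solution, verify that the six pre-nodes organize themselves into the non-obvious pattern $\{x_i, \zeta_3 x_i\}$, and choose the scalar $\mu_1$ consistently so that all three eigenforms $\omega_1^{(i)}$ fit into the common coordinate system needed for the subsequent computation of Equation~\eqref{eq:maineq}. The appearance of $\zeta_3$ in the statement is thus a non-trivial algebraic outcome of the combined residue and Galois conditions rather than the shadow of an obvious symmetry of the degenerate surface.
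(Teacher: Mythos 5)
The paper itself does not prove this proposition: it is imported wholesale from \cite[Example 13.8]{bainmoel}, and the surrounding text only recalls that description. What you propose is to reconstruct a proof by running the same argument the paper gives for the reducible cusp (Proposition~\ref{prop:redcusp-differential}), and that reconstruction is sound in outline: parts (a) and (b) are exactly the standard facts (dual graph obtained by contracting core curves, residues proportional to cylinder circumferences up to a common scalar) that the paper invokes at the other cusp, and part (c) correctly sets up the determination of the six node preimages from the triple-zero conditions together with the Galois constraint of Remark~\ref{rem:divisors-of-omegai}. So your route is genuinely different from the paper's (a computation in place of a citation), and it has the merit of making the statement self-contained.

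Two points deserve attention. First, a normalization mismatch: in the coordinates of the statement the triple zero sits at $z=0$ and the simple zero at $\infty$ (one checks $\sum_i r_i/x_i=\sum_i r_i/x_i^2=0$, while $\sum_i r_i x_i=0$ but $\sum_i r_i x_i^2\neq 0$). Your choice of putting $P_1$ at $\infty$ is legitimate but will not reproduce the displayed $x_i$ verbatim. Note also that under the ansatz $q_i=\zeta_3 p_i$ your third condition $\sum_i r_i(p_i^3-q_i^3)=0$ holds automatically because $\zeta_3^3=1$; this is precisely why the $\zeta_3$-pattern is the shape that absorbs one of the three vanishing conditions, and it partly demystifies the appearance of $\zeta_3$. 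Second, the only genuine gap is the final selection step: a parameter count (six pole positions, minus the $\PGL_2$ normalization, minus three triple-zero conditions, minus one Galois condition $\sum_i r_i^{(2)}/x_i=0$) leaves a finite set of configurations that may a priori contain more than one element, and you do not name the criterion that singles out the stated one. At the reducible cusp the paper faced the same issue with two candidate triples and resolved it by the Galois condition alone; here you would need either the remaining flat-geometric data of the cylinder decomposition or an a posteriori consistency check (as in Section~\ref{sec:picard-fuchs}) to finish.
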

\par
Again, the factor $\mu_1$ is chosen in order that the coefficients of the equation below become rational.
\par
\begin{cor}\label{cor:irredcusp-equation}
 Up to isomorphism, the stable curve $S_1$ is given as the vanishing locus in $\PP^2$ of
 \begin{align}
    \begin{split}\label{eqn:Firred}
 F_1(X,Y,Z) &= 2X^4 - 3X^3Y + 6X^3Z - 3X^2Y^2 - 6X^2YZ + 6X^2Z^2\\& + 4XY^3 -6XY^2Z -6XYZ^2 + XZ^3 + 3Y^4 + 3Y^3Z
\end{split}
 \end{align}
\end{cor}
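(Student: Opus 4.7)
The plan is to proceed exactly as in the proof of Corollary~\ref{cor:redcusp-equation} for the reducible cusp. By Proposition~\ref{prop:irredcusp-differential}, the stable differential $\omega_1 = \omega_1^{(1)}$ is given explicitly on the normalization $\widetilde{S}_1 \cong \PP^1$. The two Galois conjugate eigendifferentials $\omega_1^{(2)}$ and $\omega_1^{(3)}$, specializing at $t=1$ the line bundles $\LBundle_2$ and $\LBundle_3$, are obtained by applying the embeddings $\sigma_i : \K(S,\omega) \to \RR$ to the residues $r_j$ and to the scaling factor $\mu_1$ of Proposition~\ref{prop:irredcusp-differential}, while keeping the nodal positions $x_j$ fixed, exactly as in the construction of the $\omega_\infty^{(i)}$ inside the proof of Corollary~\ref{cor:redcusp-equation}.

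Since $S_1$ is an irreducible stable curve of arithmetic genus $3$, and the triple $(\omega_1^{(1)}, \omega_1^{(2)}, \omega_1^{(3)})$ forms a basis of $H^0(S_1, \omega_{S_1})$, the canonical morphism $S_1 \to \PP^2$ is given by $(X:Y:Z) = (\omega_1^{(1)}:\omega_1^{(2)}:\omega_1^{(3)})$, and its image is an irreducible plane quartic with three nodes, one for each pair $(x_j, \zeta_3 x_j)$. The vanishing ideal of this image is principal, generated by a unique homogeneous polynomial of degree $4$ up to scalar. It therefore suffices to substitute the triple into $F_1$ and verify that $F_1(\omega_1^{(1)}, \omega_1^{(2)}, \omega_1^{(3)})$ vanishes identically as a section of $\omega_{\widetilde{S}_1}^{\otimes 4}(4D)$ on $\PP^1$, where $D$ is the divisor of the six branch points of the normalization map.

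The main obstacle is the explicit residue-by-residue bookkeeping inside the cubic number field $\QQ(v)$: one expands $F_1$ into a rational $4$-differential on $\PP^1$ with possible poles only at the six points $x_j, \zeta_3 x_j$, simplifies using the minimal polynomial $v^3 - 3v + 1 = 0$, and checks cancellation term by term. This is routine but lengthy and is most conveniently carried out with a computer algebra system such as \cite{PARI}. The normalizing factor $\mu_1$ in Proposition~\ref{prop:irredcusp-differential} was chosen precisely so that, after these cancellations, the resulting quartic has rational integer coefficients coinciding exactly with those of $F_1$.
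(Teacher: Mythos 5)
Your proposal follows the paper's proof essentially verbatim: one substitutes the three Galois conjugates of $\omega_1$ (conjugating only $\mu_1$ and the residues $r_i$, not the pole locations $x_j$) into the canonical map and verifies the degree-$4$ relation by direct computation in $\QQ(v)$, exactly as for the reducible cusp. The only discrepancy is a normalization: the paper takes $X=-\omega_1^{(1)}$ rather than $X=\omega_1^{(1)}$, which is needed to reproduce the signs of the odd-in-$X$ coefficients of $F_1$ literally (and, as remarked after the corollary, to keep the coordinates consistent with those at $s=\infty$); since the statement is only claimed up to isomorphism, this does not affect the validity of your argument.
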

\begin{proof}
As in the proof of Corollary~\ref{cor:redcusp-equation}, one checks that putting $X = -\omega_1^{(1)}$, $Y = \omega_1^{(2)}$, $Z = \omega_1^{(3)}$, where  the $\omega_1^{(i)}$ are the three Galois conjugates of $\omega_1$, we obtain the quartic relation \eqref{eqn:Firred}. 
\end{proof}

Note that the choice of the coordinates in the previous corollary is made in such a way that the choice of coordinates of $\PP^2$ at $s=1$ is consistent with the choice of coordinates at $s=\infty$.  This will be clear after Proposition~\ref{prop:coeffshape}.

\section{Equation of the the universal family over the Teichm\"uller curve}\label{sec:equation}

In this section we will prove the main Theorem \ref{thm:mainthm}.

\subsection{The Teichm\"uller curve as a family of plane quartics}
We want to realize the family $\ol{\phi}:\ol{\XFam}\to \ol{C}$ as a family of plane quartics via the canonical embedding.
Since some of the fibers are stable curves, we need to employ the relative dualizing sheaf $\omega_{\ol{\XFam}/\ol{C}}$. 
It coincides with the relative canonical sheaf off the singular fibers. On a singular stable curve, its sections can be understood as meromorphic $1$-forms on the normalization, whose only poles are simple poles at the preimages of a node and such that the residues have opposite sign.

For every $s\in\PP^1$, let
\[\varphi_s: \ol{\XFam}_s \to \PP(\Hcoh^0(\ol{\XFam}_s,\omega_{\ol{\XFam}_s})^\dual),\qquad x\mapsto (\omega \mapsto \omega(x))\]
be the canonical map associated with $\ol{\XFam}_s$.
Note that if we fix a basis $\{\omega_s^{(i)}\}_{i=1,2,3}$ of $\Hcoh^0(\ol{\XFam}_s,\omega_{\ol{\XFam}_s})$, the map above is simply given by
\[\varphi_s:\ol{\XFam}_s \to \PP^2,\quad x\mapsto (\omega_s^{(1)}(x):\omega_s^{(2)}(x):\omega_s^{(3)}(x)).\]

\begin{prop}\label{prop:canonicalmap-embedding}
	The canonical map $\varphi_s$ is an embedding for every $s\in\PP^1$.
	As a consequence, each $\varphi_s(\ol{\XFam}_s)$ is a plane quartic.
\end{prop}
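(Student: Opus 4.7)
The plan is to split the proof according to whether the fiber $\ol{\XFam}_s$ is smooth or one of the two cusps.

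For a smooth fiber, which occurs for every $s\in\ol{C}$ away from the two cusps, $\omega_{\ol{\XFam}_s}$ is the usual canonical sheaf, so the result is the classical statement that a smooth non-hyperelliptic genus $3$ curve is canonically embedded as a smooth plane quartic. Thus it suffices to show that no smooth fiber is hyperelliptic. To see this, I would use that on a hyperelliptic curve the hyperelliptic involution $\iota$ acts as $-1$ on $\Hcoh^0(C,\Omega^1)$ (since the $+1$-eigenspace would descend to $\Hcoh^0(\PP^1,\Omega^1)=0$). Hence $\iota^*\omega_s=-\omega_s$ and $\div(\omega_s)=3P_s+Q_s$ would be $\iota$-invariant, forcing $P_s$ and $Q_s$ to be Weierstrass points. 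But at a Weierstrass point of a hyperelliptic genus $3$ curve the orders of the holomorphic differentials belong to $\{0,2,4\}$, contradicting the triple zero at $P_s$. Equivalently, one can quote that the stratum $\Stratum_3(3,1)$ has no hyperelliptic component because the zero profile is not symmetric.

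For the two cusps $s=1$ and $s=\infty$, Corollaries~\ref{cor:redcusp-equation} and~\ref{cor:irredcusp-equation} already exhibit explicit plane quartic equations $F_s=0$ satisfied by the triple $(\omega^{(1)}_s,\omega^{(2)}_s,\omega^{(3)}_s)$, so $\varphi_s(\ol{\XFam}_s)\subseteq\{F_s=0\}$. What remains is to verify that $\varphi_s$ is genuinely a closed immersion of the stable curve onto $\{F_s=0\}$. Using the explicit parametrizations of the eigendifferentials on the normalization recorded in Propositions~\ref{prop:redcusp-differential} and~\ref{prop:irredcusp-differential}, I would check by direct calculation that (i) on each component of the normalization the map is injective and unramified, and (ii) at each node the two preimages on the normalization are sent to a common point in $\PP^2$ with distinct tangent directions, so the image acquires a node. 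For $s=\infty$, the factorization $F_\infty=(X+Y+Z)\cdot(\text{cubic})$ matches the dual-graph decomposition $S_\infty=T\cup U$: the component $U$ maps isomorphically to the line $\{X+Y+Z=0\}$, while $T$ maps onto the cubic and the self-node at $A$ corresponds to the unique node of the cubic. For $s=1$, one finds that $F_1$ is irreducible with three nodes, matching the three identified pairs of points on $S_1$. A comparison of arithmetic genera (both equal to $3$) and degrees (both quartic) then closes the identification.

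The main obstacle is the bookkeeping at the two cusps: showing that the explicit residue data of $\omega_s^{(i)}$ conspire to map each node injectively with distinct tangent branches, rather than producing a cusp or a tangent intersection. This is however straightforward from the closed formulas already obtained in Section~\ref{sec:specialpoints}. The conceptual content --- non-hyperellipticity of smooth fibers --- is ultimately forced by the presence of an odd-order zero of $\omega_s$.
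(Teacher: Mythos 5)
Your argument for the smooth fibers is essentially the paper's: Lemma~\ref{lem:nothyperell} rules out hyperellipticity by exactly the observation that the hyperelliptic involution would force the zeros of $\omega_s$ to be Weierstrass points, which is incompatible with their odd orders. Where you genuinely diverge is at the two cusps. The paper does not verify the embedding there by hand; it invokes the general fact that the canonical map of a $3$-connected stable curve of genus $3$ is an embedding with quartic image (citing Catanese--Franciosi--Hulek--Reid and Artamkin), and the $3$-connectedness is read off from the dual graphs in Figure~\ref{fig:dualgraphs}. Your route instead proposes a direct check using the explicit eigendifferentials of Propositions~\ref{prop:redcusp-differential} and~\ref{prop:irredcusp-differential}: injectivity and unramifiedness on each component of the normalization, distinct branches at the nodes, and then B\'ezout/arithmetic-genus bookkeeping to match $\{F_s=0\}$ with the image. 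This is sound and has the virtue of being self-contained and of making the geometric picture (line plus nodal cubic for $S_\infty$, irreducible three-nodal quartic for $S_1$) completely explicit, but it is strictly more work and you have only sketched, not carried out, the verification that the residue data produce genuine nodes rather than tangential identifications; the paper's appeal to $3$-connectedness disposes of all of this at once and also explains \emph{why} it works. One small circularity to be aware of in either treatment: the paper's Lemma~\ref{lem:nothyperell} excludes hyperellipticity of the cusps "because of the shape of their equations" from Corollaries~\ref{cor:redcusp-equation} and~\ref{cor:irredcusp-equation}, and your cusp argument likewise presupposes that the quartic relations found there cut out the full canonical image; in both cases the honest statement is that the displayed quartic is \emph{a} relation among the $\omega_s^{(i)}$, and one must separately know (from $3$-connectedness or from your direct degree count) that the canonical system is very ample so that this relation is the whole ideal.
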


In order to prove the above proposition, we only have to check that there are no hyperelliptic curves in the family.

\begin{lem}\label{lem:nothyperell}
	None of the curves $\ol{\XFam}_s$ is hyperelliptic.
\end{lem}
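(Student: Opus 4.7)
The plan is to split the argument according to whether $\ol{\XFam}_s$ is smooth or one of the two stable cuspidal fibers. The smooth case is reduced to a statement about canonical divisors on a hyperelliptic genus $3$ curve; the cuspidal case is handled by the explicit equations already produced in Section~\ref{sec:specialpoints}.

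\textbf{Smooth fibers.} For $s\in\ol C$ such that $\ol\XFam_s$ is smooth, Remark~\ref{rem:divisors-of-omegai} provides a holomorphic differential $\omega_s^{(1)}\in(\LBundle_1)_s$ with divisor $3P_s+Q_s$, $P_s\neq Q_s$. I will show that no smooth hyperelliptic curve of genus $3$ carries a holomorphic differential with a divisor of this type. Suppose $\ol\XFam_s$ were hyperelliptic with involution $\iota$. Then $\iota$ acts by $-1$ on $H^{0}(\omega_{\ol\XFam_s})$, so $\iota$ preserves $\div(\omega_s^{(1)})$; since $P_s\neq Q_s$, this forces $\iota(P_s)=P_s$ and $\iota(Q_s)=Q_s$, so both are Weierstrass points. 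But the canonical bundle satisfies $K=2H$ with $|H|=g^{1}_{2}$, and every canonical divisor is the pull-back under the hyperelliptic cover $\pi\colon\ol\XFam_s\to\PP^{1}$ of an effective degree-$2$ divisor on $\PP^{1}$. Such a pull-back has only even multiplicities at ramification points of $\pi$, contradicting the odd multiplicity $3$ at $P_s$.

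\textbf{Cusps.} The stable fibers are $S_1$ and $S_\infty$ and their Galois conjugates under the cover $s\mapsto s^{9}$. Corollaries~\ref{cor:irredcusp-equation} and~\ref{cor:redcusp-equation} exhibit their canonical images in $\PP^{2}$ as plane curves of total degree~$4$: an irreducible plane quartic for $S_{1}$, and the union of a line and a cubic for $S_{\infty}$, matching the fact that $\omega_{S_{\infty}}$ restricts to degree~$1$ on the rational component $U$ and degree~$3$ on the rational component $T$. Component by component the canonical map is birational onto its image, so overall it has degree~$1$ onto a degree-$4$ image. This is incompatible with being hyperelliptic, which would force the canonical map to have degree~$2$ onto a conic.

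\textbf{Main obstacle.} The delicate step is the smooth fiber case, where one has to relate the $(3,1)$ zero pattern of the eigenform to the rigid structure of canonical divisors on a hyperelliptic genus $3$ curve. The cuspidal case is then a direct verification from the material of Section~\ref{sec:specialpoints}.
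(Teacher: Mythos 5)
Your argument is correct and follows essentially the same route as the paper: for smooth fibers you use that the hyperelliptic involution would have to fix both zeros of $\omega_s^{(1)}$ (since their multiplicities differ) and that a canonical divisor on a hyperelliptic genus~$3$ curve, being the pullback of a degree~$2$ divisor from $\PP^1$, cannot have a point of odd multiplicity~$3$; for the cusps you appeal, as the paper does, to the shape of the explicit canonical images from Corollaries~\ref{cor:redcusp-equation} and~\ref{cor:irredcusp-equation}. Your spelled-out justification of the parity obstruction via $|K|=\pi^*|\OX_{\PP^1}(2)|$ is a correct elaboration of the paper's terser ``impossible since they have odd order.''
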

\begin{proof}
 Since the Teichmüller curve is in the stratum $(3,1)$, the non-singular fibers cannot be hyperelliptc. Indeed if they were hyperelleptic, the flat differential would be an eigendifferential for the hyperelliptic involution and so the involution would fix or exchange its zeros. 
 Being of different order, the zeros cannot be exchanged, thus they would be Weierstrass points, which is impossible since they have odd order.
 
The two cusps cannot be hyperelliptic either because of the shape of their equations described in Corollaries \ref{cor:irredcusp-equation} and \ref{cor:redcusp-equation}. 
\end{proof}

\begin{proof}[Proof of Proposition~\ref{prop:canonicalmap-embedding}]
	For every $s$ outside the set of cusps, the canonical map is an embedding onto a non-singular plane quartic in $\PP^2$, since by Lemma~\ref{lem:nothyperell} every $\ol{\XFam}_s$ is a non-hyperelliptic curve. Since the singular fibers  $\ol{\XFam}_{\zeta_9^i}$ and $\ol{\XFam}_{\infty}$ are 3-connected (see Figure \ref{fig:dualgraphs}),  by \cite[Theorem 3.6]{cafrhu} or \cite[Theorem 1.2]{artamkin} their canonical map is still an embedding  and the image  is still a quartic.
\end{proof}



\subsection{Setup for the equation}

The canonical embedding of the fibers of the family $\ol{\phi}:\ol{\XFam}\to \ol{C}$ provides a rational map 
\[\varphi: \ol{\XFam} \to \PP^2\times \ol{C},\quad x_s\mapsto ((\omega^{(1)}_s(x_s):\omega^{(2)}_s(x_s):\omega^{(3)}_s(x_s)),s)\]
which is  an isomorphism onto its image. By Lemma~\ref{lem:degrees}, the global sections $\omega^{(i)}$ can be chosen such that
\begin{align} \label{eqn:choice-of-basis}
 \div(\omega^{(i)})=k_i\cdot \infty, \quad (i=1,2,3),
\end{align}
where $k_1=4$, $k_2=2$ and $k_3=1$. In the sequel, it will be convenient to use projective coordinates $(s_1:s_2)\in\PP^1$ for the base, so that $s\in\CC$ is identified with $(1:s)\in \PP^1$.

The image of the map $\varphi$ is the zero locus  of a degree $4$ primitive homogeneous polynomial  whose coefficients are homogeneous polynomials in $(s_1:s_2)\in \PP^1$. 
We denote this polynomial by
\[F = \sum_{i+j+k= 4} a_{i,j,k}(s_1,s_2) X^iY^jZ^k\in \CC[s_1,s_2][X,Y,Z]_4.\]
The aim  is to compute the coefficients $a_{i,j,k}(s_1,s_2)$.

\subsection{Degrees of the coefficients}

In this section, we exhibit the main ingredient, which is central in the proof of Theorem~\ref{thm:mainthm}. 
Using a structural result for the relative canonical ring of a family of quartics (cf. \cite[Proposition 7.9, (2)]{CatanesePignatelli2006}), we compute the degrees of the coefficients  $a_{i,j,k}(s_1,s_2)$.

\begin{prop}\label{prop:degrees-of-coeff}
 For every $(i,j,k)$ with $i+j+k = 4$, the coefficient $a_{i,j,k}$ of the primitive polynomial $F$ is a homogeneous polynomial in $(s_1:s_2)\in \PP^1$ of degree
 \[\deg(a_{i,j,k}(s_1,s_2)) = 4i + 2j + k - 7\]
 if $4i + 2j + k - 7\geq 0$ or the zero polynomial otherwise.
\begin{table}[ht]
\begin{center}
 \begin{tabular}{|c|c|c|c|c|c|c|c|c|c|c|c|c|c|c|c|}
  \hline
  $i$ &4 & 3 & 3 &2 & 2 & 2 & 1 & 1 & 1 &1 &0&0&0&0&0\\
  \hline
  $j$ &0 & 1 & 0 &2 & 1 & 0 & 3 & 2 & 1 &0 &4&3&2&1&0\\
  \hline
  $k$ &0 & 0 & 1 &0 & 1 & 2 & 0 & 1 & 2 &3 &0&1&2&3&4\\
   \hline
   \hline
    $4i+2j+k-7$& 9 & 7 & 6 & 5 & 4 & 3 & 3 & 2 & 1 & 0 & 1 & 0 & -1 & -2 &-3\\
    \hline
    
 \end{tabular}
\end{center}
 \caption{Degree of $a_{i,j,k}(s_1,s_2)$.\label{table:degree-a-ijk}} 
\end{table}

\end{prop}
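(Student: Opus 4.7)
The plan is to identify $F$ as a section of a distinguished line bundle on $\ol C$ and then, by decomposing the inclusion of this line bundle into $\Sym^4\VBundle$, read off the degrees of the coefficients $a_{i,j,k}$.

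By Proposition~\ref{prop:canonicalmap-embedding}, the canonical embedding realises $\ol\phi : \ol\XFam \to \ol C$ as a closed subscheme of the $\PP^2$-bundle $\pi : \PP(\VBundle) \to \ol C$. On each fibre the ideal sheaf of a plane quartic is $\OX_{\PP^2}(-4)$, so since $\Pic(\PP(\VBundle)) = \Pic(\ol C) \oplus \ZZ$ one has globally
\[
\mathcal{I}_{\ol\XFam} \cong \OX_{\PP(\VBundle)}(-4) \otimes \pi^*\mathcal{N}
\]
for a unique line bundle $\mathcal{N}$ on $\ol C$, and the quartic equation $F$ is (up to scaling) the image of a distinguished generator of $\pi_*\mathcal{I}_{\ol\XFam}(4) \cong \mathcal{N}$ under the inclusion $\mathcal{N} \hookrightarrow \pi_*\OX_{\PP(\VBundle)}(4) = \Sym^4\VBundle$.

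To identify $\mathcal{N}$ I would use adjunction. The relative Euler sequence yields $\omega_{\PP(\VBundle)/\ol C} \cong \OX(-3) \otimes \pi^*\det\VBundle$, so
\[
\omega_{\ol\XFam/\ol C} \cong \OX(1)\big|_{\ol\XFam} \otimes \ol\phi^*\bigl(\det\VBundle \otimes \mathcal{N}^{-1}\bigr).
\]
Pushing forward, using the projection formula, and noting that $\ol\phi_*\OX(1)|_{\ol\XFam} \cong \VBundle$ (via the twisted ideal sequence together with the vanishing of $\pi_*(\OX(-3) \otimes \pi^*\mathcal{N})$ and of its $R^1$ on a $\PP^2$-bundle), one obtains $\VBundle \cong \VBundle \otimes \det\VBundle \otimes \mathcal{N}^{-1}$, whence $\mathcal{N} \cong \det\VBundle$. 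This identification is the content of \cite[Proposition~7.9 (2)]{CatanesePignatelli2006} specialised to our situation.

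The proposition then follows by decomposing the inclusion $\det\VBundle \hookrightarrow \Sym^4\VBundle = \bigoplus_{i+j+k=4} \LBundle_1^i\otimes\LBundle_2^j\otimes\LBundle_3^k$ componentwise: each $a_{i,j,k}$ is a global section
\[
a_{i,j,k} \in \Hcoh^0\bigl(\ol C,\; \LBundle_1^i\otimes\LBundle_2^j\otimes\LBundle_3^k\otimes \det\VBundle^{-1}\bigr) \cong \Hcoh^0\bigl(\PP^1,\, \OX(4i+2j+k-7)\bigr),
\]
where the degree calculation uses Lemma~\ref{lem:degrees} and $\ol C \cong \PP^1$; such a section is a homogeneous polynomial in $(s_1:s_2)$ of degree $4i + 2j + k - 7$, or the zero polynomial when this number is negative. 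The main subtlety is pinning down the isomorphism $\mathcal{N} \cong \det\VBundle$ with the correct conventions for $\PP(\VBundle)$, and this is the step where we lean on the structural result of Catanese--Pignatelli.
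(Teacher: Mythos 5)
Your proposal is correct and follows essentially the same route as the paper: both identify the quartic relation $F$ with a map from $\det\VBundle$ into $\Sym^4\VBundle$ via the Catanese--Pignatelli structural result, and then read off $\deg a_{i,j,k}=4i+2j+k-7$ from Lemma~\ref{lem:degrees} using the splitting into eigenspace bundles. The only difference is cosmetic: the paper phrases the key input as $\ker(\sigma_4\colon\Sym^4 V_1\to V_4)\cong\det V_1$ and simply cites \cite[Prop.~7.9(2)]{CatanesePignatelli2006}, whereas you additionally sketch the adjunction/Euler-sequence argument that proves this identification in the genus $3$ non-hyperelliptic case.
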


An important element in the proof of this proposition is the fact that there is an identification between maps $\OX_{\PP^1}(l)\to \OX_{\PP^1}(m)$ and global sections of $\OX_{\PP^1}(m-l)$, which are identified with homogeneous polynomials  if we consider the standard trivialization of $\OX_{\PP^1}(m-l)$.
The choice of global sections \eqref{eqn:choice-of-basis} is made exactly in order to have this identification, when we consider maps of line bundles $\LBundle_i\to \LBundle_j$ and their canonical trivializations given by  taking $\omega^{(i)}$ as a basis over $\PP^1-\{\infty\}$ and $s_2^{k_i}\cdot \omega^{(i)}$ over $\PP^1-\{0\}$, where $k_i = \deg(\LBundle_i)$, $i=1,2,3$.

\begin{proof}
 As in \cite{CatanesePignatelli2006}, we consider the relative canonical algebra associated with $\ol{\phi}:\ol{\XFam}\to \ol{C}$ whose graded pieces are direct images of powers of the relative dualizing sheaf
 \[V_n =\ol{\phi}_*\omega^{\otimes n}_{\ol{\XFam}/\ol{C}}\]
  and the multiplication map 
 \[\sigma_n : \Sym^n(V_1) \to V_n\]
 given on a fiber over $s\in \ol{C}$ by
\[  (\sigma_n)_s\colon \Sym^n(\Hcoh^0(\ol{\XFam}_s, \omega_{\ol\XFam_s}))\to \Hcoh^0(\ol{\XFam}_s,\omega_{\ol\XFam_s}^{\tensor n}),\quad \omega_1\otimes\dots\otimes \omega_n\mapsto \omega_1\cdots \omega_n.\]
 
 Since the canonical embedding of $\ol{\XFam}_s$ is a plane quartic defined by the equation $F_s$, the standard exact sequence of sheaves induced by the embedding is given by 
\[0\to \mathcal{I}(F_s)(4)\to \OX_{\PP(\Hcoh^0(\ol{\XFam}_s,\omega_{\ol{\XFam}_s})^\dual)}(4)  \to \omega_{\ol\XFam_s}^{\tensor 4}\to 0.\]
This shows that  the kernel $\KBundle_4$ of $\sigma_4$ is precisely the ideal sheaf generated by $F$.

Now we can use  \cite[Proposition 7.9, (2)]{CatanesePignatelli2006}, which states that in the case of a family of genus $3$ non-hyperelliptic curves, the kernel $\KBundle_4$ of $\sigma_4$ is isomorphic to the determinant bundle of the push forward of the relative dualizing sheaf
\[ \KBundle_4\isom \det(V_1).\] 
 \par
 To extract the coefficients of $F$, we look at the projections onto the direct factors of $\Sym^4(V_1)$ given by the decomposition $V_1 = \bigoplus_{i=1}^3 \LBundle_i$ into eigenspace bundles for real multiplication.
 Let
 \[\delta(i,j,k) :\KBundle_4 \isom \det(V_1) \isom \LBundle_1\tensor \LBundle_2\tensor \LBundle_3  \to \Sym^4(V_1) \to \LBundle_1^{\tensor i}\tensor \LBundle_2^{\tensor j} \tensor \LBundle_3^{\tensor k}\]
 denote the composition of the above maps.
 By Lemma~\ref{lem:degrees},
 \[\LBundle_1\tensor \LBundle_2\tensor \LBundle_3 \isom \OX_{\PP^1}(7)\]
 and
 \[\LBundle_1^{\tensor i}\tensor \LBundle_2^{\tensor j} \tensor \LBundle_3^{\tensor k} \isom \OX_{\PP^1}(4i+2j+k)\]
 where the isomorphisms are determined by the choice of the basis \eqref{eqn:choice-of-basis}.
 Thanks to our choice of isomorphisms,  the map $\delta(i,j,k)$ is given by a section of $\OX_{\PP^1}(4i+2j+k-7)$. Therefore,
 \[\delta(i,j,k)(\omega^{(1)}\tensor \omega^{(2)} \tensor \omega^{(3)}) 
      = a_{i,j,k}(s) \cdot {\omega^{(1)}}^{\tensor i} \tensor {\omega^{(2)}}^{\tensor j} \tensor {\omega^{(3)}}^{\tensor k}\]
where $a_{i,j,k}$ is a homogeneous polynomial in $(s_1:s_2)\in \PP^1$ of degree $4i+2j+k-7$ or $0$. By our construction, they are the coefficient of $F$ that we were searching for.
\end{proof}

Note that the equality 
\[a_{0,0,4} = a_{0,1,3} = a_{0,2,2} = 0\]
can be easily deduced also from  the form of the divisors of $\omega_s^{(i)}$ given by Condition~\eqref{eqn:divisors-of-omegai} with a local computation around the zeroes of the differentials.

\subsection{Conditions from the orbifold point}

Since the family over $\ol{C}$ comes from a degree $9$ covering of the original Teichm\"uller curve, the coefficients of the polynomial describing this family must have an order $9$ symmetry, which we want to understand.
We use this symmetry in order show that most of the coefficients of the polynomials $a_{i,j,k}(s_1,s_2)$ vanish.

\begin{prop}
\label{prop:coeffshape}

The coefficients of the dehomogenized polynomial $a_{4,0,0}(1:s)$ are all zero with the exception of the top and the constant one, while for all the other non-trivial triples $(i,j,k)$, all the coefficients of  $a_{i,j,k}(1:s)$ are zero other than the top one, namely
\[a_{i,j,k}(1:s) = \begin{cases} \alpha_{4,0,0}\cdot s^9+\beta ,& (i,j,k)=(4,0,0)\\
\alpha_{i,j,k}\cdot s^{4i+2j+k-7},& \text{otherwise}
\end{cases}\]
for $\alpha_{i,j,k},\beta\in \CC$.
\end{prop}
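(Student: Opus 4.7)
The plan is to combine a $\ZZ/9$-symmetry coming from the deck group of the covering $\tilde{C} \to C$, $s \mapsto t = s^9$, with the three pieces of explicit data at the orbifold point and the reducible cusp computed in Section~\ref{sec:specialpoints}.

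First I would set up the symmetry. The deck group is cyclic of order $9$, generated by $\tilde{\mu}: s \mapsto \zeta s$ with $\zeta = \zeta_9$, and it lifts to an automorphism of the family $\tilde{\XFam} \to \tilde{C}$ whose restriction to the fixed fiber $S_0$ is a power $g^m$ of the order-$9$ automorphism $g$ from Proposition~\ref{prop:orbifoldpt}. Since each global section $\omega^{(i)}$ is determined up to scalar by its divisor $k_i\cdot \infty$, which is preserved by $\tilde{\mu}$, we obtain $\tilde{\mu}^* \omega^{(i)} = \zeta^{\epsilon_i} \omega^{(i)}$, where by Corollary~\ref{cor:matrixorb} the triple $(\epsilon_1, \epsilon_2, \epsilon_3)$ is a multiple of $(1,5,7)$ modulo $9$. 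Compatibility of the zero locus of $F$ with this action gives the semi-invariance
\[
F(\zeta^{\epsilon_1} X, \zeta^{\epsilon_2} Y, \zeta^{\epsilon_3} Z;\ \zeta s) = c \cdot F(X, Y, Z;\ s)
\]
for some $c \in \CC^*$.

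Comparing coefficients of $X^i Y^j Z^k$ in this identity yields
\[
a_{i,j,k}(1:\zeta s) = c\,\zeta^{-(i\epsilon_1 + j\epsilon_2 + k\epsilon_3)}\, a_{i,j,k}(1:s),
\]
so, writing $a_{i,j,k}(1:s) = \sum_b \alpha_b s^b$, only exponents $b$ in a single residue class modulo $9$ (depending on $c$ and on $(i,j,k)$) can have $\alpha_b \neq 0$. Together with the degree bound $d := 4i+2j+k-7$ from Proposition~\ref{prop:degrees-of-coeff}, at most one admissible exponent in $\{0,\dots,d\}$ remains, except when $d \geq 9$.

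The last step is to identify the surviving monomial using the two special fibers. The orbifold equation $F_0 = X^4 + XZ^3 + 3Y^3Z$ provides the constant term $\alpha_0 = a_{i,j,k}(1:0)$, which is nonzero exactly for $(i,j,k) \in \{(4,0,0), (1,0,3), (0,3,1)\}$; the reducible-cusp equation from Corollary~\ref{cor:redcusp-equation} provides the top coefficient $\alpha_d = a_{i,j,k}(0:1)$, which is nonzero for every triple with $d \geq 0$. For $(4,0,0)$ we have $d=9$ and both $b=0$ and $b=9$ occur, which is consistent since $0 \equiv 9 \pmod 9$, giving $a_{4,0,0}(1:s) = \alpha\, s^9 + \beta$. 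For $(1,0,3)$ and $(0,3,1)$ we have $d=0$ and the single surviving monomial is the nonzero constant. For every other non-trivial triple, $\alpha_0 = 0$ by $F_0$ while $\alpha_d \neq 0$ by $F_\infty$; since $0<d<9$, the unique exponent in $\{0,\dots,d\}$ lying in the admissible residue class is $b = d$, giving $a_{i,j,k}(1:s) = \alpha_{i,j,k} s^d$.

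The main delicate point will be the rigorous set-up of the first step: lifting the deck transformation to the total space and verifying that its action on each eigenspace line bundle $\LBundle_i$ is given by a single global scalar (so that $\tilde{\mu}^*\omega^{(i)} = \zeta^{\epsilon_i}\omega^{(i)}$ holds as sections over all of $\tilde{C}$, and not merely on the fiber over $s=0$). Once this equivariance is in place, the remainder of the argument is essentially a bookkeeping exercise combining the single-residue-class constraint with the orbifold and reducible-cusp data.
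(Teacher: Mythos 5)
Your proposal is correct and rests on the same core mechanism as the paper's proof: the order-$9$ deck symmetry forces each $a_{i,j,k}(1{:}s)$ to be supported on a single residue class of exponents mod $9$, and the degree bound $d=4i+2j+k-7$ from Proposition~\ref{prop:degrees-of-coeff} then leaves at most one monomial (two for $(4,0,0)$). Where you genuinely diverge is in how the admissible residue class is identified. The paper pins down the multiplier system explicitly: it proves $A_H=\diag(\zeta_9,\zeta_9^5,\zeta_9^7)$ by combining Corollary~\ref{cor:matrixorb} with the fact that $\LBundle_1$ is a theta characteristic (so $H$ acts on $(\LBundle_1)_0$ by $\zeta_9$), computes $\lambda=\zeta_9^4$ from the single nonzero constant coefficient $a_{0,3,1}$, and then checks that the congruence forces $b\equiv d \pmod 9$ for \emph{every} triple, with no further input from the special fibers. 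You instead leave $A_H$ and $c$ undetermined and read the residue class off a posteriori from the nonvanishing of the leading coefficient $\alpha_d=a_{i,j,k}(0{:}1)$, which requires checking that all twelve coefficients of $F_\infty$ with $d\ge 0$ are nonzero (true by Corollary~\ref{cor:redcusp-equation}); the orbifold equation $F_0$ is then not strictly needed for the statement. Your route buys you independence from the theta-characteristic lemma at the cost of using more of the cusp data; both are sound. Two points you should tighten: (i) the scalar $c$ must be shown independent of $s$ — this follows by comparing a degree-zero coefficient such as $a_{0,3,1}$, exactly as the paper does for $\lambda$; (ii) the ``delicate point'' you flag — that $H$ preserves each $\LBundle_i$ and acts by a global scalar — is precisely what the paper's preceding lemma supplies, via Schur's lemma together with the observation that a nonzero morphism between line bundles of equal degree on $\PP^1$ is a constant; your divisor argument for $\tilde\mu^*\omega^{(i)}=\zeta^{\epsilon_i}\omega^{(i)}$ works once that preservation is in place (e.g.\ because the $\LBundle_i$ have pairwise distinct degrees, so no permutation is possible).
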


\begin{rem} The lifting of the action of the Galois group of the $9$-sheeted covering $\tilde C\to C$ to an action on the family $\ol{\phi}:\ol{\XFam}\to \ol{C}$ can be seen as follows.

A generator of the Galois group has a lift to an element of the Veech group, corresponding to an elliptic automorphism of $\HH$ of order $9$. Since the Veech group embeds naturally into the mapping class group, we obtain in fact an automorphism of the Teichm\"uller space $\Teich_3$. We pass to an appropriate quotient $\Moduli'$ of $\Teich_3$ that is a fine moduli space finitely covering $\Moduli_3$ and such that we have a factorization $\tilde C\to \Moduli'$ and obtain an automorphism of $\Moduli'$ fixing the image of $\tilde C$. By the universal property of $\Moduli'$, being a fine moduli space, this automorphism lifts to an automorphism of the universal family over $\Moduli'$, hence its restriction to $\tilde C$ gives an automorphism $\ol{\XFam}\to\ol{\XFam}$.
\end{rem}

We fix the Galois automorphism 
\[h:\ol{C}\to \ol{C},\quad s\mapsto \zeta_9^2\cdot s\]
of the base.
The lifted automorphism $H:\ol{\XFam}\to\ol{\XFam}$ over $h$ induces an automorphism of $\VBundle$, which in the basis \eqref{eqn:choice-of-basis} is given as an element of $A_H \in \PGL_3(\CC(s))$. The proof of Proposition~\ref{prop:coeffshape} essentially boils down to using the identity
\[\lambda \cdot F_s(X,Y,Z) = F_{h(s)} \circ A_H(X,Y,Z)\]
with $\lambda\in \CC^\times$. 
We first need a lemma on the shape of the matrix $A_H$.

\begin{lem}
\begin{enumerate}[(a)]
 \item $H$ preserves the eigenspace bundles $\LBundle_i$ and thus acts on the basis \eqref{eqn:choice-of-basis} by a diagonal matrix.
 \item The matrix $A_H$ is constant and given by
\[A_H = \diag(\zeta_9: \zeta_9^5: \zeta_9^7) \in \PGL_3(\CC).\]
\end{enumerate}
\end{lem}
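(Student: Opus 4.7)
The plan is to deduce (a) from the uniqueness of the real-multiplication eigendecomposition combined with the distinct degrees in Lemma \ref{lem:degrees}, and then use rigidity of global sections on $\PP^1$ to show $A_H$ is constant, before pinning down the scalars by specialization at the orbifold point.

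For (a), the decomposition $\VBundle=\bigoplus\LBundle_i$ is canonically attached to the action of $\mathcal{O}_{\K(S,\omega)}$ by real multiplication. Since $H$ is the lift of a deck transformation of a covering of Teichm\"uller curves, it preserves the flat structure on each fiber and hence commutes with real multiplication. The induced automorphism of $\VBundle$ therefore permutes the $\LBundle_i$, but Lemma \ref{lem:degrees} forces each $\LBundle_i$ to be preserved, since their degrees $4,2,1$ are pairwise distinct. In particular the action on the ordered basis \eqref{eqn:choice-of-basis} is given by a diagonal matrix.

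For the constancy of $A_H$, recall that $\omega^{(i)}$ was chosen to have divisor $k_i\cdot\infty$ on $\ol{C}$ with $k_i=\deg\LBundle_i$, and the base automorphism $h:s\mapsto\zeta_9^2\,s$ fixes both $0$ and $\infty$. The image of $\omega^{(i)}$ under $H$ is therefore a global section of $\LBundle_i$ with the same divisor $k_i\cdot\infty$, and this space is one-dimensional; hence it equals $c_i\,\omega^{(i)}$ for some $c_i\in\CC^\times$, giving $A_H=\diag(c_1,c_2,c_3)$ constant in $s$.

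To identify the $c_i$, restrict to the fiber over the fixed point $s=0$. Since the cyclic action of the elliptic subgroup of $\Delta(9,\infty,\infty)$ on the orbifold fiber $S_0$ is faithful, $H|_{S_0}$ has order $9$, and by Proposition \ref{prop:orbifoldpt} it must be some power $g^a$ with $a\in(\ZZ/9\ZZ)^\times$. The normalization $h(s)=\zeta_9^2\,s$ of the Galois generator is chosen precisely so that the corresponding lift acts on $S_0$ as $g$ itself, which is verified by comparing the induced action of $h$ on the tangent space $T_0\ol{C}$ with that of $g$ transported via the Kodaira-Spencer map at the orbifold point. Plugging $a=1$ into Corollary \ref{cor:matrixorb} then yields $(c_1,c_2,c_3)=(\zeta_9,\zeta_9^5,\zeta_9^7)$. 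The main obstacle is precisely this last identification $a=1$: the first two steps are formal consequences of the rigidity of line bundles on $\PP^1$, whereas matching $h$ to the specific generator $g$ of the cyclic automorphism group rather than to some $g^a$ requires the tangent-space comparison above and justifies the otherwise arbitrary-looking choice of $\zeta_9^2$ as the base generator.
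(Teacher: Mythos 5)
Parts (a) and (b) of your argument are sound. Your observation that $H$ can at worst permute the $\LBundle_i$ through a Galois automorphism of the cubic trace field, and that the pairwise distinct degrees $4,2,1$ from Lemma~\ref{lem:degrees} then force each $\LBundle_i$ to be preserved, is if anything more careful than the paper's appeal to Schur's lemma. Your divisor argument for constancy (the image of $\omega^{(i)}$ is a section of $\LBundle_i$ with divisor $k_i\cdot\infty$, hence a scalar multiple of $\omega^{(i)}$) is equivalent to the paper's remark that the diagonal entries are endomorphisms of line bundles on $\PP^1$ of the same degree, i.e.\ constants.

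The gap is in the final identification. You correctly isolate the crux, namely showing that the lift of $h\colon s\mapsto\zeta_9^2 s$ restricts on $S_0$ to $g$ itself rather than to some other generator $g^a$, but you then only assert that this ``is verified by comparing the induced action of $h$ on $T_0\ol{C}$ with that of $g$ transported via the Kodaira--Spencer map,'' without performing that comparison. As written, the claim that the normalization $\zeta_9^2$ ``is chosen precisely so that'' the lift is $g$ is circular: one must actually compute which power of the deck group generator corresponds to $g$. The paper closes exactly this gap with a short computation that is, in effect, the comparison you allude to: since $\LBundle_1$ is maximal Higgs it is a theta characteristic, so $\LBundle_1^{\otimes 2}\cong\Omega^1_{\ol{C}}$ near $s=0$, and $h$ acts on $(\Omega^1_{\ol{C}})_0$ by multiplication by $\zeta_9^2$; hence the first diagonal entry of $A_H|_0=A_g^a=\diag(\zeta_9^a,\zeta_9^{5a},\zeta_9^{7a})$ must satisfy $\zeta_9^{2a}=\zeta_9^2$, forcing $a\equiv 1 \pmod 9$. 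Your Kodaira--Spencer route can be made rigorous (the maximal Higgs condition is precisely the statement that Kodaira--Spencer identifies $\LBundle_1^{\otimes 2}$ with $\Omega^1_{\ol{C}}(\log S)$), but until the eigenvalue computation is actually carried out, the conclusion $a=1$, and with it the exponents $(1,5,7)$ rather than $(a,5a,7a)$, remains unproved.
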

\begin{proof}
 That $H$ preserves the eigenspace bundles $\LBundle_i$ follows from the $\LBundle_i$ being irreducible and from Schur's Lemma.
The diagonal entries of $A_H$ must be constant in $s$ since they are maps between line bundles on $\PP^1$ of the same degree. Hence, we just need to find the action of $A_H$ on one fiber in order to determine it completely.

 Consider now the fiber over $0$, which corresponds to the curve with the order $9$ automorphism. The  lifting automorphism $H:\ol{\XFam}\to\ol{\XFam}$ must specialize to a primitive element of the automorphism group of $\ol\XFam_0$.
 Thus by Corollary~\ref{cor:matrixorb}, the matrix $A_H$ must specialize to a primitive element of the group generated by the matrix $A_g$.
 In order to determine this element, we compute how $A_H$ acts on ${\LBundle_1}$.

Since $\LBundle_1$ is maximal Higgs, it is a Theta characteristic, \ie 
\[\LBundle_1^2\cong \Omega^1_{\ol{C}}.\] 
Since the  action of $H$ on ${(\Omega^1_{\ol{C}})}_0$ is  given by multiplication by $\zeta_9^2$,  the action on  ${(\LBundle_1)}_0$ must be given by multiplication by $\zeta_9$.
This fixes the element in the group generated by the matrix $A_g$ of Corollary~\ref{cor:matrixorb}, and so $A_H=A_g=\diag(\zeta_9: \zeta_9^5: \zeta_9^7) \in \PGL_3(\CC)$.
\end{proof}

Now we can use the automorphism $H$ in order to find symmetries of the polynomial $F$.

\begin{proof}[Proof of Proposition~\ref{prop:coeffshape}]
The existence of $H$ implies that  the fiber $\ol{\XFam}_s$ is isomorphic to the fiber $\ol{\XFam}_{h(s)}$. However, since in our case we are considering  canonical embeddings, we know more, namely that the locus of the polynomial $F_{h(s)}$ must be the same as the one of $F_s$ up to a projective linear transformation of $\PP^2$.  Recall that with the fixed  isomorphism $ \PP(\Hcoh^0(X,\Omega^1)^\dual)\isom \PP^2$ given by the choice of basis \eqref{eqn:choice-of-basis},  the canonical embedding is given by the map 
\[\varphi_s:\ol{\XFam}_s\to \PP^2,\quad  \varphi_s(x)=(\omega_s^{(1)}(x):\omega_s^{(2)}(x):\omega_s^{(3)}(x))\]
and so the projective linear transformation of $\PP^2$ that we are looking for is  given by the matrix $A_H$.

Hence the condition imposed by the existence of $H$ is 
\begin{align}\label{eqn:orbcond}
\lambda \cdot F_s(X,Y,Z) = F_{h(s)} \circ A_H(X,Y,Z)
\end{align}
for some $\lambda\in\CC^\times$. 

In order to compute $\lambda$, we look at the condition for the constant coefficient $a_{0,3,1}$. This coefficient cannot be zero, since it is not zero for the fiber over $1$ or $\infty$ by Corollary~\ref{cor:redcusp-equation} or Corollary~\ref{cor:irredcusp-equation}.
Condition~\eqref{eqn:orbcond} yields
\[\lambda \cdot a_{0,3,1}=a_{0,3,1}\cdot \zeta_9^{3\cdot 5+7}=\zeta_9^{4}\cdot a_{0,3,1},\]
whence we have
\[\lambda=\zeta_9^{4}.\]
Therefore, the other coefficients must satisfy 
\[\zeta_9^4\cdot  a_{i,j,k}(1,s)=\zeta_9^{i+5\cdot j + 7\cdot k} a_{i,j,k}(1,\zeta_9^2 s).\]
Using this condition together with Proposition~\ref{prop:degrees-of-coeff}, it is immediate to conclude.
\end{proof}

\subsection{Computation of the family}
Now we have gathered all the ingredients in order to prove Theorem~\ref{thm:mainthm} and write down the algebraic equation of the universal family over the Kenyon-Smillie-Teichm\"uller curve.
\begin{proof}[Proof of Theorem \ref{thm:mainthm}]
By Proposition \ref{prop:coeffshape}, it is sufficient to know the values of the coefficients $a_{i,j,k}(1:s)$ in two special point in order to determine them completely.
Using the computation of $a_{i,j,k}(0:1)$ over the special point $\infty=(0:1)\in \ol{C}$ given by Corollary~\ref{cor:redcusp-equation}, we can compute all coefficients other that $a_{4,0,0}(1,s)$, of which we have determined just the constant term. 
Now we can use Corollary~\ref{cor:irredcusp-equation}, where we computed the coefficients $a_{i,j,k}(1:1)$. The coefficient of the monomial $X^4$ of \eqref{eqn:Firred} tells us the sum of the top and the constant coefficient of $a_{4,0,0}(1:s)$.
The explicit form of the universal family $\ol{\phi}:\ol{\XFam}\to \ol{C}$ is then given by
\begin{equation}\label{eq:spar}
\begin{split}
(s^9 + 1)X^4  - 3s^7  X^3 Y + 6s^6  X^3 Z - 3 s^5  X^2 Y^2 - 6 s^4  X^2 Y Z + s^3(6  X^2 Z^2 + 4 X Y^3)  \\
 \quad - 6 s^2  X Y^2 Z + s(-6X Y Z^2 + 3Y^4)  + X Z^3 + 3Y^3 Z=0.
\end{split}
\end{equation}

If we exclude $s=0$ and $s=\infty$, we can apply the projective linear transformation 
\[(X:Y:Z)\mapsto (X:s^2 Y:s^3 Z)\]
 of $\PP^2$ and see that the family $\ol{\phi}:\ol{\XFam}\to \ol{C}$ indeed descends to the family $\phi:\XFam \to C - \{0\}$ described by Equation~\eqref{eq:maineq}.

We now identify, with a slight abuse of notation, $P_t$ with $\varphi_t(P_t)$ and $Q_t$ with $\varphi_t(Q_t)$, where $\div(\omega_t^{(1)}) = 3P_t + Q_t$.
Then, by Condition~\eqref{eqn:divisors-of-omegai},
\[P_t = (0:0:1)\quad \text{and}\quad Q_t = (0:\ast:\ast).\]
Hence, the point $P_t$ is given by the intersection of the curve $\XFam_t$ with the line $X=0$, thus $P_t = (0:1:-1)$. 
\end{proof}

\subsection{The torsion map and the hyperflex}

The torsion map is the map (unique up to multiplication by an element of $\CC^\times$) exhibiting the torsion condition $n(P_t-Q_t)=0\in \Pic^0(\XFam_t)$.

In order to compute it, we need to find a map  $\XFam_t \to \PP^1$ totally ramified at $P_t$ and $Q_t$. In fact, \cite[Section 13]{bainmoel} suggests that the degree of this map is $n=3$, since this is the case for the torsion map over the irreducible cusp.

\begin{proof}[Proof of Proposition \ref{prop:torsion}]
From Equation \ref{eq:maineq}, it is easy to check that the point $Q_t$ is a hyperflex and the tangent at $P_t$ is a triple tangent passing through $Q_t$. It is trivial now that the projection from $Q_t$ onto a line is the desired torsion map.
The explicit expression of the torsion map can be given by projecting from $Q_t$ onto the line $\{Z=0\}\subset \PP^2$. 
\end{proof}
 
 It is interesting to notice how Proposition~\ref{prop:torsion} can be proven without using Equation \ref{eq:maineq}, only using that we are in the $(3,1)$-stratum and that the points of the Teichmüller curve are canonically embedded as quartics in $\PP^2$.
 
 \begin{proof}[Alternative proof of Proposition \ref{prop:torsion}]
  The torsion map is a degree $3$ map  $\XFam_t \to \PP^1$ totally ramified at $P_t$ and $Q_t$. It is not difficult to prove that every degree $3$ map from a smooth quartic of $\PP^2$ to $\PP^1$ is a central projection from a point $x$ on the quartic. Since $\div(\omega_t)=3P_t+Q_t$ and the curves $S_t$ are canonically embedded, the line $\overline{P_tQ_t}$ is a triple tangent at $P_t$. This triple tangent is also the projection line $\overline{xP_t}$ since the torsion map is  totally ramified at $P_t$. Hence $x=Q_t$, namely the torsion map is the central projection from $Q_t$. Moreover, since $Q_t$ is a totally ramified point of this map, it must be a hyperflex of the quartic curve $\XFam_t$.
 \end{proof}

\subsection{Torsion map and real multiplication}
\label{sec:torsion}

Now we want to briefly explain the relation between the torsion map and real multiplication in the case of Veech-Ward-Bouw-M\"oller Teichm\"uller curves and compare this situation with ours. 

Let us recall the Bouw-M\"oller construction  of the universal family $\HFam\to \PP^1$ over the completion of the Teichm\"uller curve uniformized by a triangle group. 
For simplicity, we restrict to the case $\Delta(n,\infty,\infty)$ discussed in \cite[Section 5]{bouwmoel}. 
They construct a particular family of cyclic coverings $\YFam_t\to\PP^1$, parametrized by $t\in\PP^1 - \{0,1,\infty\}$, which is branched over $4$ points and admits an involution $\sigma$. This family  descends to the universal family $\HFam:=\YFam/\langle \sigma\rangle\to \PP^1$ over the Teichm\"uller curve. One can  construct the following  commutative diagram
     \begin{center}
 $\xymatrix{
\YFam_t \ar[d]_{\ZZ/ 2\ZZ} \ar[r]^{\ZZ/n\ZZ} &  \PP^1 \ar[d]\\
\HFam_t \ar[r]^{\Tor}         & \PP^1}$
\end{center}
where $\Tor$ is the torsion map.

Note that, even though we constructed the above diagram starting from the upper $n$-cyclic covering, one can build it also only from the torsion map, since this is indeed its Galois normalization diagram.
The relation between real multiplication and torsion map is now easy to explain. Let $\tau:\YFam_t\to \YFam_t$ be the Galois automorphism of $\YFam_t\to \PP^1$. It induces complex multiplication by $\QQ(\zeta_n)$ on the Jacobian of $\YFam_t$.
One can check that real multiplication is given by the push-forward to $\HFam_t$ of the correspondence induced by the graph of the endomorphism  $ \tau + \tau^*\in \End_{\QQ}\Jac(\YFam_t)$, which generates the totally real subfield of $\QQ(\zeta_n)$. 

Since by Proposition~\ref{prop:torsion} we know the explicit form of the torsion map in the Kenyon-Smillie-Teichm\"uller curve case, we can compute its Galois normalization and check if real multiplication comes from a diagram analogous to the one constructed in the Veech-Ward-Bouw-M\"oller case.

Note that one can easily compute the ramification points of the torsion map. There are the two expected triple ramification points $\Tor(Q_t)$ and  $\Tor(P_t)$ and other $6$ double ramification points, which we denote by $x_i(t)\in \PP^1$ for $i=1,\dots,6$.
Let $\mathcal{Z}_t$ be the genus $2$ curve defined by the affine equation
\[ \mathcal{Z}_t: \qquad v^2=\prod_{i=1}^6 (x-x_i(t)).\]

By general considerations on how the Galois normalization has to behave on the ramification points of the map $\Tor$, we find that the normalization diagram is given by the fibered product diagram
\begin{center}
 $\xymatrix{
 \YFam_t \ar[d] \ar[r]^{\ZZ/3\ZZ} &  \mathcal{Z}_t \ar[d]^{p}\\
\XFam_t \ar[r]^{\Tor}          & \PP^1}$
\end{center}
where $\YFam_t:=\XFam_t \times_{\PP^1} \mathcal{Z}_t$ is the fibered product.
The map 
\[p:\mathcal{Z}_t\to \PP^1,\quad (x,v)\mapsto v\]
  is the quotient by the hyperelliptic involution, thus it is ramified over the points $x_i$.

Now we want to show that real multiplication cannot be constructed from the Galois normalization diagram of the torsion map as in the Veech-Ward-Bouw-M\"oller case.
\begin{prop}
 Real multiplication is not given by a correspondence induced by the graph of an automorphism of $\YFam_t$ that descends to the quotient $\mathcal{Z}_t$.
\end{prop}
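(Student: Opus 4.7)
The plan is to analyse all push-forward correspondences on $\Jac(\XFam_t)$ coming from graphs of automorphisms of $\YFam_t$ descending to $\mathcal{Z}_t$, and show they take values in $\ZZ\cdot\id$, which is far too small to contain real multiplication by the cubic field $\QQ(v)$.

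First I would set up the Galois-theoretic picture. Since $\Tor\colon \XFam_t\to\PP^1$ is a non-Galois degree $3$ map whose associated discriminant cover is exactly $\mathcal{Z}_t$, the fibered product $\YFam_t$ is the $S_3$-Galois closure of $\Tor$, with $\mathcal{Z}_t = \YFam_t/A_3$ and $\XFam_t = \YFam_t/\gen{s}$ for a transposition $s\in S_3$. The involution induced on $\mathcal{Z}_t$ by $S_3/A_3$ has quotient $\PP^1$ and so must be the hyperelliptic involution; for a very general $t$ one has $\Aut(\mathcal{Z}_t) = \ZZ/2\ZZ$, hence the group of automorphisms of $\YFam_t$ descending to $\mathcal{Z}_t$ is exactly $S_3$, and any algebraic family of such automorphisms over the Teichm\"uller curve takes values in this $S_3$.

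Next I would reduce the computation of $\pi_*\tau_*\pi^*$, where $\pi\colon \YFam_t\to\XFam_t$, to easy linear algebra. Using $\pi^*\pi_* = 1 + s$ on $\Jac(\YFam_t)$ and the $s$-invariance of $\pi^*D$, one gets $\pi^*(\pi_*\tau_*\pi^*D) = (\tau + s\tau s^{-1})_*\pi^*D$. Decomposing $\Jac(\YFam_t)\sim \Jac(\mathcal{Z}_t)\oplus P$ into the $A_3$-invariant part and its Prym complement $P$, the $A_3$-invariant summand is pulled back from $\mathcal{Z}_t$ and so is killed by $\pi_*$; thus $\pi^*\Jac(\XFam_t)$ sits inside $P^{s=1}$. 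On each copy of the standard $S_3$-representation inside $P$, choose a basis $e_1, e_2$ diagonalising a generator $r\in A_3$ as $\diag(\zeta_3,\zeta_3^{-1})$ and with $s$ swapping the basis vectors, so the $s$-fixed line is spanned by $e_1+e_2$.

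A direct calculation on this line then yields $(\tau + s\tau s^{-1})(e_1+e_2) = 2(e_1+e_2)$ for $\tau\in\{1,s\}$ and $-(e_1+e_2)$ for every other $\tau\in S_3$, using only $\zeta_3+\zeta_3^{-1} = -1$. Consequently $\pi_*\tau_*\pi^*\in\{2,-1\}\cdot\id_{\Jac(\XFam_t)}$ for all $\tau\in S_3$, and the $\QQ$-subalgebra of $\End_\QQ\Jac(\XFam_t)$ generated by all such pushed-forward graph correspondences lies in $\QQ\cdot\id$. Since real multiplication by $\QQ(v) = \QQ(\zeta_9+\zeta_9^{-1})$ is a three-dimensional $\QQ$-subalgebra, it cannot be generated in this way. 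The main obstacle is the Galois-theoretic identification of $\YFam_t$ with the Galois closure of $\Tor$ and the verification that the $\ZZ/2$-action on $\mathcal{Z}_t$ coming from $S_3$ coincides with the hyperelliptic involution; once this is in place the remaining character-theoretic computation is routine.
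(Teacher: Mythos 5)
Your route is genuinely different from the paper's. The paper argues that real multiplication by the cubic field $\QQ(v)=\QQ(\zeta_9+\zeta_9^{-1})$ would have to come from an order-$9$ automorphism of $\YFam_t$, observes that such an automorphism would induce an order-$3$ automorphism of $\mathcal{Z}_t$, and then rules this out by an explicit computation. You instead compute \emph{all} correspondences $\pi_*\tau_*\pi^*$ for $\tau$ in the deck group $S_3$ of the Galois closure and show they are scalar. Your character-theoretic computation is correct: $\Hcoh^1(\YFam_t,\QQ)$ has no trivial $S_3$-summand, $\pi^*\Hcoh^1(\XFam_t,\QQ)$ lies in the $s$-invariants of the standard isotypic part, and $(\tau+s\tau s^{-1})$ acts there by $2$ or $-1$ using $\zeta_3+\zeta_3^{-1}=-1$. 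If completed, this would give a cleaner and in fact stronger conclusion than the paper's, since it does not restrict attention to the cyclotomic mechanism for producing a totally real field.

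However, there is a genuine gap at the step where you assert that $\Aut(\mathcal{Z}_t)=\ZZ/2\ZZ$ for very general $t$, which is what you need to conclude that the automorphisms of $\YFam_t$ descending to $\mathcal{Z}_t$ are exactly $S_3$. This is not a formality and cannot be waved through with "very general": the locus of genus-$2$ curves with automorphisms beyond the hyperelliptic involution is a \emph{divisor} in $\Moduli_2$, so a one-parameter family such as $t\mapsto\mathcal{Z}_t$ could a priori lie entirely inside it; genericity in $t$ does not follow from genericity in moduli. This assertion is precisely the non-formal computational content of the proposition, and it is what the paper's proof supplies: one computes the six simple branch points $x_1(t),\dots,x_6(t)$ of $\Tor$ (equivalently, the Weierstrass points of $\mathcal{Z}_t$) explicitly from the equation of the family and checks that no order-$3$ Möbius transformation of $\PP^1$ permutes them, which excludes an order-$3$ automorphism of $\mathcal{Z}_t$. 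Note moreover that your argument requires a \emph{stronger} check than the paper's: an extra automorphism of $\mathcal{Z}_t$ of any order could yield descending automorphisms of $\YFam_t$ outside $S_3$ whose correspondences need not be scalar, so you must also exclude extra involutions, i.e. verify that no nontrivial Möbius transformation at all stabilizes the set $\{x_1(t),\dots,x_6(t)\}$. Until that explicit verification is carried out, the reduction to $S_3$ — and hence the whole argument — is unsupported.
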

\begin{proof}
Recall that  the trace field of the Kenyon-Smillie-Teichmüller curve is cubic. Hence if real multiplication is given by a totally real subfield of the complex field induced by an automorphism of $\YFam_t$, this automorphism has to be of order $9$.  
Since the Galois automorphism of $\YFam_t\to \mathcal{Z}_t$ is of order $3$, this cannot work. We can already see that this is not as in the Bouw-M\"oller case.

Consider now an order $9$ covering automorphism of $\YFam_t$ descending to the quotient $\mathcal{Z}_t$.
Since the order of the descending automorphism on $\mathcal{Z}_t$ has to divide $9$, then it has to be of order $3$ because there are no order $9$ automorphisms on a genus $2$ curve.
We show that there is no such automorphism of $\mathcal{Z}_t$.

Recall that $p:\mathcal{Z}_t\to \PP^1$ is the hyperelliptic family. Hence if there were an order $3$ automorphism of $\mathcal{Z}_t$, it  would commute with the hyperelliptic involution and thus would descend to multiplication by a $3$rd root of unity on $\PP^1$ in the appropriate coordinates. Since the $6$ ramification points $x_i(t)$ must be preserved by the automorphism, they have to lie on two circles (any change of coordinates of $\PP^1$ preserves circles). Using Proposition~\ref{prop:torsion}, one can however compute explicitly $x_i(t)$ and check that this is not the case.
\end{proof}

\section{Picard-Fuchs equations}\label{sec:picard-fuchs}
In this section, we forget how we found Equation~\eqref{eq:maineq} 
and  we prove independently that this equation defines  a Teichm\"uller curve. 
We will do this by showing that the absolute cohomology bundle 
splits as a direct sum of three rank two subbundles and that one of them is maximal Higgs. 
The main tool  is the computation  of the Picard-Fuchs equation associated 
to a local section of the $(1,0)$-part of the absolute cohomology bundle of the family via the Griffiths-Dwork method.

\subsection{Griffiths-Dwork method}
We   quickly recall the Griffiths-Dwork algorithm for the  computation of the Picard-Fuchs equation of a family of projective hypersurfaces. This method can more generally be used in the case of projective toric varieties. See \cite{CoxKatz} for more details.

Let $V\subset \PP^n$ be a hypersurface of degree $d$ defined by a homogeneous equation $f=0$. We want to identify elements of $\Hcoh^{n-1}(V)$ with elements in $\Hcoh^n(\PP^n-V)$ via the residue map. Any element of $\Hcoh^n(\PP^n-V)$ can be represented by a form
\[\frac{P\Omega_0}{f^k},\quad \deg(P)=kd-(n+1)\]
where $\Omega_0$ is a section of the sheaf $\Omega^n_{\PP^n}(n+1)$, which in fact is trivial, and $P$ is a homogeneous polynomial. 
The residue map 
\[ \Res:\Hcoh^n(\PP^n-V)\to \Hcoh^{n-1}(V)\]
is defined by the property that
\[ \int_{\gamma} \Res\left( \frac{P\Omega_0}{f^k}\right)=\int_{T(\gamma)} \frac{P\Omega_0}{f^k}\]
for any $(n-1)$-cycle $\gamma$ in $V$ and its tubular neighborhood $T(\gamma)$.

Let $J(f):=\langle\partial f / \partial x_0,\dots,\partial f / \partial x_n\rangle$ be the Jacobian ideal of $V$.
The key ingredient of the Griffiths-Dwork method is the isomorphism
\[\left( \frac{\CC[x_0,\dots,x_n]}{J(f)}\right)_{kd-(n+1)} \cong \Prim\Hcoh^{n-k,k-1}(V) \quad\text{ for } k=1,\dots,n \]
where the subscript denotes the $kd-(n+1)$-graded piece and $\Prim\Hcoh^{n-k,k-1}(V)$ is the primitive part of $\Hcoh^{n-k,k-1}(V)$.\\
In other words, if a form $\frac{P\Omega_0}{f^k}$ has a high order pole, namely if $k>n$,
 we can find a representative of its cohomology class with a pole of order less than $n$.
The key equality that allows to compute representatives with lower order poles is given by
\begin{equation}\label{eq:grif-dwork}
\left( \sum_i G_i \frac{\partial f}{\partial x_j} \right)\frac{\Omega_0}{f^k}=\frac{1}{k-1}\left( \sum_i\frac{\partial G_i}{\partial x_j} \right)\frac{\Omega_0}{f^{k-1}} \in \Hcoh^n(\PP^n-V).
\end{equation}

Now consider a family of hypersurfaces $\{V_s\}$ defined by a varying polynomial $f_s$, and a form  represented  by  $\omega_s= \frac{P_s\Omega_0}{f_s^k}$. 
The action of the Gauss-Manin connection is given by
\[\nabla (\frac{\partial}{\partial s })(\omega_s)=\frac{(-kP_sf_s'+f_sP_s')\Omega_0}{f_s^{k+1}}.\]
If we iterate the derivation many times, we  find a form with poles of order greater than $n$. We can then find a representative of its cohomology class with a lower order pole by using Equation~\ref{eq:grif-dwork}.

\subsection{Picard-Fuchs equations of the family}

The Picard-Fuchs equation of a family of curves with respect to a local section of the $(1,0)$-part of its cohomology bundle is the differential equation satisfied by the periods of the chosen local section. The set of solutions of the Picard-Fuchs equation forms a local system isomorphic to the dual of the local system associated with the irreducible part of absolute cohomology bundle containing the chosen local section. Therefore, Picard-Fuchs equations characterize the local system underlying the absolute cohomology bundle in a unique way. 
A nice exposition of Picard-Fuchs equations can be found in \cite[Section 3]{bouwmoel}.
We will now prove Proposition~\ref{prop:P-F} using the Griffiths-Dwork method.

\begin{proof}[Proof of  Proposition \ref{prop:P-F}]
Let  $\ol{\phi}:\ol\XFam\to \ol{C}$ be the family of curves described by Equation~\eqref{eq:spar}. We use this family, since it has unipotent monodromy around the $10$ cusps. This will be useful, when we  check that the absolute cohomology bundle has a maximal Higgs rank $2$ subbundle.

Let $F_s$ be the polynomial describing $\ol\XFam_s\subset \PP^2$. Let $D:=\nabla (\frac{\partial}{\partial s })$, where $\nabla$ is the Gauss-Manin connection. 

  Let $\omega$ be a local section of the $(1,0)$-part of the cohomology bundle at $0\in \PP^1$. Since the cohomology bundle is of rank~$6$, the local section $\omega$ must a priori satisfy 
\[ a_0\omega+a_1 D(\omega)+\dots+ D^6(\omega)=0\]
where $a_i$, $i=0,\dots,5$, is a rational function in $s$ with  poles of order at most $6-i$.
The associated differential equation  
\[ a_0 y+a_1 \frac{\partial y}{\partial s} +\dots+ \frac{\partial^6 y}{{\partial s}^6}=0\]
is satisfied by the periods  $s\mapsto \int_{\gamma} \omega(s)$ for any locally constant $1$-cycle $\gamma$ of $\ol\XFam_s$.

We now want to use the identification of $\Hcoh^{1}(\ol\XFam_s)$ with $\Hcoh^2(\PP^2-\ol\XFam_s)$ via the residue map, and in particular the isomorphisms
\[\left( \frac{\CC[X,Y,Z]}{J(F_s)}\right)_{4k-3} \cong \Hcoh^{2-k,k-1}(\ol\XFam_s),\quad k=1,2.\]
Note that  in this case the primitive part is the full cohomology space.

Using the above identifications, one can check that  a basis for the cohomology groups is given by 
\[ \Hcoh^{1,0}(\ol\XFam_s)=\langle X\frac{\Omega_0}{F_s}, Y\frac{\Omega_0}{F_s},Z\frac{\Omega_0}{F_s} \rangle,\quad \Hcoh^{0,1}(\ol\XFam_s)=\langle X^5\frac{\Omega_0}{F_s^2}, Y^5\frac{\Omega_0}{F_s^2},Z^5\frac{\Omega_0}{F_s^2} \rangle\]
where the coefficient of $\frac{\Omega_0}{F_s^k}$ are considered modulo the Jacobian ideal.

We fix the local section $\omega(s)=X\frac{\Omega_0}{F_s}$ and we want to find the differential equation satisfied by the periods of $\omega$.
A priori, we are searching for an order $6$ differential equation, but we can check that indeed $\omega(s)$ satisfies an order $2$ differential equation. 
We want to find rational functions $a_0$ and $a_1$ such that
\[a_0(s)\int X\frac{\Omega_0}{F_s}+a_1(s)\int \frac{\partial}{\partial s}\left(X\frac{\Omega_0}{F_s}\right)+\int \frac{\partial^2}{{\partial s}^2}\left(X\frac{\Omega_0}{F_s}\right)=0.\]
We can get rid of the integral sign and rewrite the above equation in De Rham-cohomology as 
\[a_0(s) X\frac{\Omega_0}{F_s} -(a_1(s) F'(s)+F''(s)) X\frac{\Omega_0}{F_s^2} +2 {F'(s)}^2  X \frac{\Omega_0}{F_s^3}=0\in \Hcoh^1(\XFam_s).\]
Now we begin to apply Griffiths-Dwork method. 

We can check that $2 {F'(s)}^2  X$ is in the Jacobian ideal $J(F_s)$. Hence we can apply Equality~\eqref{eq:grif-dwork} and compute

 \[2 {F'(s)}^2  X \frac{\Omega_0}{F_s^3}=\left(G_1 \frac{\partial F_s}{\partial X}+G_1 \frac{\partial F_s}{\partial Y}+G_1 \frac{\partial F_s}{\partial Z}\right)\frac{\Omega_0}{F_s^3}=\frac{1}{2}G\frac{\Omega_0}{F_s^2}\in \Hcoh^1(\XFam_s)\] 
for $G=\left (\frac{\partial G_1}{\partial X}+\frac{\partial G_2}{\partial Y}+\frac{\partial G_3}{\partial Z}\right)$, where the $G_i(s)$ are some homogeneous polynomials in function of the rational parameter $s$.

We can now compute that
\[-XF'_s=\frac{9}{s}X^5 ,\quad \frac{1}{2}G-F''_sX= -\frac{81s^7}{s^9-1}X^5 \quad \text{in }  \frac{\CC[X,Y,Z]}{J(F_s)}.\]
Hence, we must have  that 
\[a_1(s)=\frac{9s^8}{s^9 - 1}.\]

By our choice of $a_1(s)$, we have
\[-(a_1(t) F'(s)+F''(s)) X\frac{\Omega_0}{F_s^2} +2 {F'(s)}^2  X \frac{\Omega_0}{F_s^3}=H\frac{\Omega_0}{F_s^2}\in \Hcoh^1(\XFam_s)\]
 for some $H=\left (H_1\frac{\partial F_s}{\partial X}+H_2\frac{\partial F_s}{\partial Y}+H_3\frac{\partial F_s}{\partial Z}\right)\in J(F_s)$.

Therefore, we can apply Equation~\ref{eq:grif-dwork} again finding that 
\[H\frac{\Omega_0}{F_s^2}=\left (\frac{\partial H_1}{\partial X}+\frac{\partial H_2}{\partial Y}+\frac{\partial H_3}{\partial Z}\right)\frac{\Omega_0}{F_s}=-\frac{16s^7}{s^9-1}X \frac{\Omega_0}{F_s}\in \Hcoh^1(\XFam_s).\]

Hence, by setting
\[a_0(s)=\frac{16s^7}{s^9-1}\]
we are done.

Since we have found a differential equation of order $2$, we have proved that the absolute cohomology bundle has a subbundle of rank $2$, whose dual is isomorphic to the space of solution of the differential equation
\begin{equation}\label{eq:diffeq}
 \frac{16s^7}{s^9-1} y+ \frac{9s^8}{s^9 - 1}y'+y''=0
\end{equation}
which has indeed regular singularities at the $9$-th roots of unity and at infinity.
It is easy to show that this differential equation is the pull-back of a unique hypergeometric differential equation
\[L_1(y)=\frac{16}{81t(t-1)}y+\frac{17t - 8}{9t(t - 1)}y'+y''=0\]

under the map $s\mapsto t=s^9$, which is the one in the statement of Proposition~\ref{prop:P-F}

Using the same algorithm, one can compute the differential equations satisfied by $\omega(s)^{(2)}=Y\frac{\Omega_0}{F_s}$ and $\omega(s)^{(3)}=Z\frac{\Omega_0}{F_s}$ and check that they are respectively the pull-back under the map $s\mapsto t=s^9$ of the unique differential equations 
\[ L_2(y)=\frac{4}{81t(t-1)}y+\frac{13t - 4}{9t(t-1)}y'+y''=0\]
\[ L_3(y)=\frac{1}{81t(t-1)}y+\frac{11t - 2}{9t(t - 1)}y'+y''=0.\]
 Hence we have proved that the cohomology bundle splits as a sum of three  rank $2$ subbundles.
\end{proof}

Now, we want to prove that the local system defined by the differential equation $L_1(y)=0$, or equivalently by the differential equation~\eqref{eq:diffeq}, is maximal Higgs. 

\begin{proof}[Proof of Corollary \ref{cor:maxhiggs}]

We can use \cite[Proposition 3.2]{bouwmoel} which relates the order of vanishing of the Kodaira-Spencer map to the local exponent of the associated differential equation in the case of unipotent monodromy.
One can easily compute the local exponent of the differential Equation~\eqref{eq:diffeq} and see that the Riemann scheme is given by
\begin{center}
\begin{tabular}{ c | c  |c|c|c|c|c|c|c|c  }
 $\zeta_9^1$& $\zeta_9^2$ &$\zeta_9^3$& $\zeta_9^4$&$\zeta_9^5$&$\zeta_9^6$&$\zeta_9^7$&$\zeta_9^8$&$\zeta_9^9$& $\infty$ \\
  \hline
  0 &0 &0 &0 &0 &0 &0 &0 &0 & 4 \\
  0 &0 &0 &0 &0 &0 &0 &0 &0 & 4 \\
\end{tabular}
\end{center}
Since Equation~\eqref{eq:diffeq} corresponds to  a family with unipotent monodromies, by \cite[Proposition 3.2]{bouwmoel},  the order of vanishing of the Kodaira-Spencer map at   these cusps is $0$. Hence  by definition, the associated local system is maximal Higgs and it has to be irreducible.  
\end{proof}

By  \cite[Theorem 5.3]{moeller06}, if the absolute cohomology bundle associated with a family of curves over $C$ has a maximal Higgs rank two subbundle, then the family is a finite unramified covering of a Teichm\"uller curve. By Proposition~\ref{prop:P-F}, this is the case for the family of curves described by Equation~\eqref{eq:maineq}.

\bibliographystyle{amsalpha}
\bibliography{teich31}
\end{document}